\definecolor{dunkelgrau}{rgb}{0.8,0.8,0.8}
\definecolor{hellgrau}{rgb}{0.9,0.9,0.9}
\newcommand{\R}{\ensuremath{\mathbb{R}}}
\newcommand{\oR}{\ensuremath{\overline{\mathbb{R}}}}
\renewcommand{\O}{\ensuremath{\mathcal{O}}}
\renewcommand{\>}{\right\rangle}
\newcommand{\<}{\left\langle}
\newcommand{\Prox}{\ensuremath{\text{Prox}}}
\newcommand{\Z}{\ensuremath{\mathcal{Z}}}
\newcommand{\h}{\ensuremath{\mathcal{H}}}
\newcommand{\hk}{\ensuremath{\mathcal{K}}}
\newcommand{\D}{\ensuremath{\mathcal{D}}}
\theoremstyle{plain}
\newtheorem{theorem}{Theorem}
\theoremstyle{definition}
\DeclareMathOperator*\dom{dom}%
\DeclareMathOperator*\argmin{arg\,min}%
\title{A variable smoothing algorithm for solving convex optimization problems}
\author{Radu Ioan Bo\c t
\thanks {Faculty of Mathematics, Chemnitz University of Technology, D-09107 Chemnitz, Germany, e-mail: radu.bot@mathematik.tu-chemnitz.de. Research partially supported by DFG (German Research Foundation), project BO 2516/4-1.}
\and Christopher Hendrich
\thanks{Faculty of Mathematics, Chemnitz University of Technology, D-09107 Chemnitz, Germany, e-mail: christopher.hendrich@mathematik.tu-chemnitz.de.}
}
\date{\today}
\begin{document}
\maketitle

{\bf Abstract.} In this article we propose a method for solving unconstrained optimization problems with convex and Lipschitz continuous objective functions. By making use of the Moreau envelopes of the functions occurring in the objective, we smooth the latter to a convex and differentiable function with Lipschitz continuous gradient by using both variable and constant smoothing parameters. The resulting problem is solved via an accelerated first-order method and this allows us to recover approximately the optimal solutions to the initial optimization problem with a rate of convergence of order $\O(\tfrac{\ln k}{k})$ for variable smoothing and of order $\O(\tfrac{1}{k})$ for constant smoothing. Some numerical experiments employing the variable smoothing method in image processing and in supervised learning classification are also presented.

{\bf Keywords.} Moreau envelope, regularization, variable smoothing, fast gradient method

{\bf AMS subject classification.} 90C25, 90C46, 47A52

\section{Introduction}\label{sectionIntro}

In this paper we introduce and investigate the convergence properties of an efficient algorithm for solving nondifferentiable optimization problems of type
\begin{equation}
\label{eq:primal-original}
\inf_{x \in \h}{\left\{f(x)+g(Kx)\right\}},
\end{equation}
where $\h$ and $\hk$ are real Hilbert spaces, $f : \h \rightarrow \R$ and $g:\hk \rightarrow \R$ are convex and Lipschitz continuous functions and the operator $K:\h \rightarrow \hk$ is linear and continuous. By replacing the functions $f$ and $g$ through their Moreau envelopes, approach which can be seen as part of the family of smoothing techniques introduced in \cite{NesterovExcessiveGap05,NesterovSmoothMin05,NesterovSmoothing05}, we approximate \eqref{eq:primal-original} by a convex optimization problem with a differentiable objective function with Lipschitz continuous gradient. This smoothing approach can be seen as the counterpart of the so-called double smoothing method investigated in \cite{BotHendrich12,BotHendrich12b,DevGliNes12}, which assumes the smoothing of the Fenchel-dual problem to \eqref{eq:primal-original} to an optimization problem with a strongly convex and differentiable objective function with Lipschitz continuous gradient. There, the smoothed dual problem is solved  via an appropriate fast gradient method (cf. \cite{Nesterov04}) and a primal optimal solution is reconstructed with a given level of accuracy. In contrast to that approach, which asks for the boundedness of the effective domains of $f$ and $g$, determinant is here the boundedness of the effective domains of the conjugate functions $f^*$ and $g^*$, which is automatically guaranteed by the Lipschitz continuity of $f$ and $g$, respectively. For solving the resulting smoothed problem we propose an extension of the accelerated gradient method of Nesterov (cf. \cite{Nesterov83}) for convex optimization problems involving variable smoothing parameters which are updated in each iteration. This scheme yields for the minimization of the objective of the initial problem a rate of convergence of order $\O(\tfrac{\ln k}{k})$, while, in the particular case when the smoothing parameters are constant, the order of the  rate of convergence becomes $\O(\tfrac{1}{k})$. Nonetheless, using variable smoothing parameters has an important advantage, although the theoretical rate of convergence is not as good as when these are constant. In the first case the approach generates a sequence of iterates $(x_k)_{k \geq 1}$ such that $(f(x_k)+g(Kx_k))_{k \geq 1}$ converges to the optimal objective value of \eqref{eq:primal-original}. In the case of constant smoothing variables the approach provides a sequence of iterates which solves the problem \eqref{eq:primal-original} with an apriori given accuracy, however, the sequence  $(f(x_k)+g(Kx_k))_{k \geq 1}$ may not converge to the optimal objective value of the problem to be solved.

In addition, we show, on the one hand, that the two approaches can be designed and keep the same convergence behavior also in the case when $f$ is differentiable with Lipschitz continuous gradient and, on the other hand, that they can be employed also for solving the extended version of \eqref{eq:primal-original}
 \begin{equation}
\label{eq:primal-sum}
\inf_{x \in \h}{\left\{f(x)+\sum_{i=1}^m{g_i(K_ix)}\right\}},
\end{equation}
where $\hk_i$ are real Hilbert spaces, $g_i : \hk_i \rightarrow \R$ are convex and Lipschitz continuous functions and $K_i: \h \rightarrow \hk_i$, $i=1,\ldots,m$, are linear continuous operators.

The structure of this paper is as follows. In Section \ref{sectionPrelimFormulation} we recall some elements of convex analysis and establish the working framework. Section \ref{sectionPS} is mainly devoted to the description of the iterative methods for solving \eqref{eq:primal-original} and of their convergence properties for both variable and constant smoothing and to the presentation of some of their variants. In Section \ref{sectionExample} numerical experiments employing the variable smoothing method in image processing and in supervised vector machines classification are presented.

\section{Preliminaries of convex analysis and problem formulation}\label{sectionPrelimFormulation}

In the following we are considering the real Hilbert spaces $\h$ and $\hk$ endowed with the inner product $\left\langle \cdot ,\cdot \right\rangle$ and associated norm $\left\| \cdot \right\| = \sqrt{\left\langle \cdot, \cdot \right\rangle}$. By $B_{\h} \subseteq \h$ and $\R_{++}$ we denote the \textit{closed unit ball} of $\h$ and the set of strictly positive real numbers, respectively. The \textit{indicator function} of the set $C \subseteq \h$ is the function $\delta_C : \h \rightarrow  \oR := \R \cup \left\{ \pm \infty \right\}$ defined by $\delta_C(x) = 0$ for $x \in C$ and $\delta_C(x) = +\infty$, otherwise. For a function $f: \h \rightarrow \oR$ we denote by $\dom f := \left\{ x \in \h : f(x) < +\infty \right\}$ its \textit{effective domain}. We call $f$ \textit{proper} if $\dom f \neq \emptyset$ and $f(x)>-\infty$ for all $x \in \h$. The \textit{conjugate function} of $f$ is $f^*:\h \rightarrow \overline{\mathbb{R}}$, $f^*(p)=\sup{\left\{ \left\langle p,x \right\rangle -f(x) : x\in\h \right\}}$ for all $p \in \h$. The \textit{biconjugate function} of $f$ is $f^{**} : \h \rightarrow \oR$, $f^{**}(x) = \sup{\left\{ \left\langle x,p \right\rangle -f^*(p) : p\in\h \right\}}$ and, when $f$ is proper, convex and lower semicontinuous, according to the Fenchel-Moreau Theorem, one has $f=f^{**}$. The \textit{(convex) subdifferential} of the function $f$ at $x \in \h$ is the set $\partial f(x) = \{p \in \h : f(y) - f(x) \geq \left\langle p,y-x \right\rangle \ \forall y \in \h\}$, if $f(x) \in \R$, and is taken to be the empty set, otherwise. For a linear operator $K: \h \rightarrow \hk$, the operator $K^*: \hk \rightarrow \h$ is the \textit{adjoint operator} of $K$ and is defined by $\left\langle K^*y,x  \right\rangle = \left\langle y,Kx  \right\rangle$ for all $x \in \h$ and all $y \in \hk$.

Having two functions $f,\,g : \h \rightarrow \overline{\mathbb{R}}$, their \textit{infimal convolution} is defined by $f \Box g : \h \rightarrow \oR$, $(f \Box g) (x) = \inf_{y \in \h}\left\{ f(y) + g(x-y) \right\} $ for all $x \in \h$. When $f,g : \h \rightarrow \overline \R$ are proper and convex, then
\begin{equation}\label{exinfconv}
(f+g)^* = f^* \Box g^*
\end{equation}
provided that $f$ (or $g$) is continuous at a point belonging to $\dom f \cap \dom g$. For other qualification conditions guaranteeing \eqref{exinfconv} we refer the reader to \cite{Bot10}.

The \textit{Moreau envelope} of parameter $\gamma \in \R_{++}$ of a proper, convex and lower semicontinuous function $f : \h \rightarrow \oR$  is the function $^{\gamma}f : \h \rightarrow \R$, defined as
$$^{\gamma}f(x) := f \Box \left(\frac{1}{2\gamma}\left\| \cdot \right\|^2\right)(x) = \inf_{y \in \h} \left \{f(y) + \frac{1}{2\gamma}\|x-y\|^2 \right\} \ \forall x \in \h.$$
For every $x \in \h$ we denote by $\Prox_{\gamma f}(x)$ the \textit{proximal point} of parameter $\gamma$ of $f$ at $x$, namely, the unique optimal solution of the optimization problem
\begin{equation}\label{prox-def}
\inf_{y\in \h}\left \{f(y)+\frac{1}{2\gamma}\|y-x\|^2\right\}.
\end{equation}
Notice that $\Prox_{\gamma f} :\h \rightarrow \h$ is single-valued and firmly nonexpansive (cf. \cite[Proposition 12.27]{BauschkeCombettes11}), i.e.,
\begin{equation}\label{firmnon}
\|\Prox_{\gamma f}(x) - \Prox_{\gamma f}(y)\|^2 + \|(x-\Prox_{\gamma f}(x)) - (y-\Prox_{\gamma f}(y)) \|^2\leq \|x-y\|^2 \ \forall x,y \in \h,
\end{equation}
thus $1$-Lipschitz continuous, i.e., Lipschitz continuous with Lipschitz constant equal to $1$. We also have (cf. \cite[Theorem 14.3]{BauschkeCombettes11})
\begin{equation}\label{prox-conj}
^{\gamma}f(x) + {}^{\tfrac{1}{\gamma}}f^*(\tfrac{x}{\gamma}) = \frac{\|x\|^2}{2\gamma}  \ \forall x \in \h
\end{equation}
and the extended \textit{Moreau's decomposition formula}
\begin{equation}\label{prox-f-star}
\Prox_{\gamma f}(x) + \gamma\Prox_{\tfrac{1}{\gamma}f^*}\left(\tfrac{x}{\gamma}\right)=x \ \forall x \in \h.
\end{equation}
The function $^{\gamma}f$ is (Fr\'echet) differentiable on $\h$ and its gradient $\nabla(^{\gamma}f) : \h \rightarrow \h$ fulfills  (cf. \cite[Proposition 12.29]{BauschkeCombettes11})
\begin{equation}\label{grad}
\nabla(^{\gamma}f)(x) = \tfrac{1}{\gamma}(x - \Prox_{\gamma f}(x)) \ \forall x \in \h,
\end{equation}
being in the light of \eqref{firmnon} $\tfrac{1}{\gamma}$-Lipschitz continuous. For a nonempty, convex and closed set $C \subseteq \h$ and $\gamma \in \R_{++}$ we have that $\Prox_{\gamma \delta_C} = \mathcal{P}_C$, where
$\mathcal{P}_C : \h \rightarrow C$, $\mathcal{P}_C(x) = \argmin_{z\in C}\left\| x-z \right\|$, denotes the \textit{projection operator} on $C$.

When $f:\h \rightarrow \R$ is convex and differentiable having an $L_{\nabla f}$-Lipschitz continuous gradient, then for all $x,y\in \h$ it holds (see, for instance, \cite{BauschkeCombettes11, Nesterov04, Nesterov83})
\begin{equation}\label{ineq-nest}
f(y) \leq f(x) + \<\nabla f(x),y-x \> + \frac{L_{\nabla f}}{2} \left\| y-x \right\|^2.
\end{equation}

The optimization problem that we investigate in this paper is
\begin{equation*}
\hspace{-1.8cm}(P) \quad \quad \inf_{x \in \h}{\left\{f(x)+g(Kx)\right\}},
\end{equation*}
where $K:\h \rightarrow \hk$ is a linear continuous operator and  $f:\h \rightarrow \R$ and $g:\hk \rightarrow \R$ are convex and $L_f$-Lipschitz continuous and $L_g$-Lipschitz continuous functions, respectively. According to \cite[Proposition 4.4.6]{BorVan10} we have that
\begin{equation}\label{dom-bound}
\dom f^* \subseteq L_fB_{\h} \ \mbox{and} \ \dom g^* \subseteq L_gB_{\hk}.
\end{equation}

\section{The algorithm and its variants}\label{sectionPS}
\subsection{The smoothing of the problem $(P)$}\label{subsectionPrimalSmooth}

The algorithms we would like to introduce and analyze from the point of view of their convergence properties assume in a first instance an appropriate smoothing of the problem $(P)$ which we are going to describe in the following.

For $\rho \in \R_{++}$ we smooth $f$ via its Moreau envelope of parameter $\rho$, $^{\rho}f : \h \rightarrow \R$, $^{\rho}f(x) = \left(f\Box\frac{1}{2\rho}\left\| \cdot \right\|^2\right)(x)$ for every $x \in \h$. According to the Fenchel-Moreau Theorem and due to \eqref{exinfconv}, one has for $x \in \h$
\begin{align*}
^{\rho}f(x) = \left(f^{**}\Box\frac{1}{2\rho}\left\| \cdot \right\|^2\right)(x)  = \left(f^* + \frac{\rho}{2}\left\| \cdot \right\|^2 \right)^*(x) =  \sup_{p \in \h}{ \left\{ \left\langle  x,p \right\rangle  -f^*(p) -\frac{\rho}{2} \left\| p \right\|^2 \right\}}.
\end{align*}
As already seen, $^{\rho}f$ is differentiable and its gradient (cf. \eqref{grad} and \eqref{prox-f-star})
$$\nabla(^{\rho}f) : \h \rightarrow \h, \ \nabla(^{\rho}f) = \tfrac{1}{\rho}(x - \Prox_{\rho f}(x)) = \Prox_{\frac{1}{\rho}f^*}\left( \frac{x}{\rho}\right) \ \forall x \in \h,$$
is $\tfrac{1}{\rho}$-Lipschitz continuous.

For $\mu \in \R_{++}$ we smooth $g \circ K$ via $^{\mu}g \circ K : \h \rightarrow \R$, $^{\mu}g \circ K(x) = \left(g\Box\frac{1}{2\mu}\left\| \cdot \right\|^2\right)(Kx)$ for every $x \in \h$.
According to the Fenchel-Moreau Theorem and due to \eqref{exinfconv}, one has
\begin{align*}
^{\mu}g \circ K(x) & = \left(g^{**}\Box\frac{1}{2\mu}\left\| \cdot \right\|^2\right)(Kx)  = \left(g^* + \frac{\mu}{2}\left\| \cdot \right\|^2 \right)^*(Kx)\\
& =  \sup_{p \in \hk}{ \left\{ \left\langle x,K^*p \right\rangle -g^*(p) -\frac{\mu}{2} \left\| p \right\|^2 \right\}} \ \forall x \in \h.
\end{align*}
The function $^{\mu}g \circ K$ is differentiable and its gradient $\nabla(^{\mu}g \circ K) : \h \rightarrow \h$ fulfills (cf. \eqref{grad} and \eqref{prox-f-star})
$$\nabla(^{\mu}g \circ K)(x) = K^*\nabla(^{\mu}g)(Kx) = \tfrac{1}{\mu}K^*(Kx - \Prox_{\mu g}(Kx)) = K^*\Prox_{\tfrac{1}{\mu}g^*}\left(\tfrac{Kx}{\mu} \right) \ \forall x \in \h.$$
Further, for every $x,y \in \h$ it holds (see \eqref{firmnon})
\begin{align*}
	\left\| \nabla(^{\mu}g \circ K)(x) - \nabla(^{\mu}g \circ K)(y) \right\|
	&\leq \tfrac{1}{\mu}\|K\| \left\| (Kx - \Prox_{\mu g}(Kx)) - (Ky - \Prox_{\mu g}(Ky)) \right\|\\
	& \leq \frac{\left\| K \right\|^2}{\mu} \left\| x-y \right\|,
\end{align*}
which shows that $\nabla(^{\mu}g \circ K)$ is $\frac{\left\| K \right\|^2}{\mu}$-Lipschitz continuous.

Finally, we consider as smoothing function for $f + g \circ K$ the function $F^{\rho,\mu} : \h \rightarrow \R$, $F^{\rho,\mu}(x) = {^\rho} f(x) + {^\mu}g\circ K(x),$
which is differentiable with Lipschitz continuous gradient $\nabla F^{\rho,\mu} : \h \rightarrow \h$ given by
$$\nabla F^{\rho,\mu}(x) = \Prox_{\tfrac{1}{\rho}f^*}\left(\tfrac{x}{\rho}\right) + K^*\Prox_{\tfrac{1}{\mu}g^*}\left(\tfrac{Kx}{\mu} \right) \ \forall x \in \h,$$
having as Lipschitz constant $L(\rho,\mu) := \frac{1}{\rho} + \frac{\left\| K \right\|^2}{\mu}$.

For $\rho_2 \geq \rho_1 > 0$ and every $x \in \h$ it holds (cf. \eqref{dom-bound})
\begin{align*}
{^{\rho_1}f(x)}  & = \sup_{p \in \dom f^*}{ \left\{ \left\langle  x,p \right\rangle  -f^*(p)-\frac{\rho_1}{2} \left\| p \right\|^2\right\}}\\
               & \leq \sup_{p \in \dom f^*}{ \left\{ \left\langle  x,p \right\rangle -f^*(p) -\frac{\rho_2}{2} \left\| p \right\|^2 \right\}} + \sup_{p \in \dom f^*}\left\{ \frac{\rho_2 - \rho_1}{2}\left\| p \right\|^2\right\}\\
               & \leq {}^{\rho_2}f(x) + (\rho_2 - \rho_1)\frac{L_f^2}{2},
\end{align*}
which yields, letting $\rho_1 \downarrow 0$ (cf. \cite[Proposition 12.32]{BauschkeCombettes11}),
$${}^{\rho_2}f(x) \leq f(x) \leq {}^{\rho_2}f(x) + \rho_2\frac{L_f^2}{2}.$$
Similarly, for $\mu_2 \geq \mu_1 > 0$ and every $y \in \hk$ it holds
\begin{align*}
{^{\mu_1} g(y)} \leq {}^{\mu_2} g(y) + (\mu_2 - \mu_1)\frac{L_g^2}{2},
\end{align*}
and
$${}^{\mu_2}g(y) \leq g(y) \leq {}^{\rho_2}g(y) + \rho_2\frac{L_g^2}{2}.$$
Consequently, for $\rho_2 \geq \rho_1 > 0$, $\mu_2 \geq \mu_1 > 0$ and every $x \in \h$ we have
\begin{align}\label{ineq-smoothing-different-parameter}
\begin{aligned}
F^{\rho_2,\mu_2}(x) \leq F^{\rho_1,\mu_1}(x)  \leq  F^{\rho_2,\mu_2}(x) + (\rho_2 - \rho_1)\frac{L_f^2}{2} + (\mu_2 - \mu_1)\frac{L_g^2}{2}
\end{aligned}
\end{align}
and
\begin{align}\label{ineq-smoothing-different-parameter2}
F^{\rho_2,\mu_2}(x) \leq F(x) \leq & F^{\rho_2,\mu_2}(x) + \rho_2\frac{L_f^2}{2} + \mu_2\frac{L_g^2}{2}.
\end{align}

\subsection{The variable smoothing and the constant smoothing algorithms}\label{subvarsmo}

Throughout this paper  $F: \h \rightarrow \R$, $F(x) = f(x) + g(Kx)$, will denote the objective function of $(P)$. The variable smoothing algorithm which we present at the beginning of this subsection can be seen as an extension of the accelerated gradient method of Nesterov (cf. \cite{Nesterov83}) by using variable smoothing parameters, which we update in each iteration.
\begin{empheq}[box=\fbox]{align}\label{opt-scheme-Nesterov-variable-smoothing}
\text{Initialization} : & \ t_1=1, \ y_1=x_0 \in \h,\ (\rho_k)_{k\geq1},(\mu_k)_{k\geq1}\subseteq \R_{++}  \tag{A1} \\ 
\text{For} \ k\geq1 : & \  L_k=\frac{1}{\rho_k} + \frac{\left\| K \right\|^2}{\mu_k}, \notag \\ 
		& \ x_k = y_k - \frac{1}{L_k} \left( \Prox_{\frac{1}{\rho_k}f^*}\left( \frac{y_k}{\rho_k}\right) + K^*\Prox_{\frac{1}{\mu_k}g^*}\left( \frac{Ky_k}{\mu_k} \right) \right), \notag \\
		&  \ t_{k+1} = \frac{1+\sqrt{1+4t_k^2}}{2}, \notag \\
		&  \ y_{k+1} = x_k + \frac{t_k-1}{t_{k+1}}(x_k - x_{k-1}) \notag
\end{empheq}
The convergence of the algorithm \eqref{opt-scheme-Nesterov-variable-smoothing} is proved by the following theorem.

\begin{theorem}\label{assertion-convergence-variable-smoothing}
Let $f:\h \rightarrow \R$ be a convex and $L_f$-Lipschitz continuous function, $g:\hk \rightarrow \R$ a convex and $L_g$-Lipschitz continuous function, $K: \h \rightarrow \hk$ a linear continuous operator and $x^*\in\h$ an optimal solution to $(P)$. Then, when choosing
$$\rho_k = \frac{1}{ak} \ \mbox{and} \ \mu_k= \frac{1}{bk} \ \forall k \geq 1,$$
where $a,b \in \R_{++}$, algorithm \eqref{opt-scheme-Nesterov-variable-smoothing} generates a sequence $(x_k)_{k\geq1} \subseteq \h$ satisfying
\begin{align}\label{scheme2-upper-bound}
	F(x_{k+1})-F(x^*)
	\leq \frac{2(a + b\left\|K\right\|^2)}{k+2} \left\|x_0 - x^*\right\|^2 + \frac{2(1+\ln(k+1))}{k+2} \left( \frac{L^2_{f}}{a} + \frac{L^2_{g}}{b} \right) \ \forall k \geq 1,
\end{align}
thus yielding a rate of convergence for the objective of order $\O(\tfrac{\ln k}{k})$.
\end{theorem}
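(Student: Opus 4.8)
The plan is to regard the $k$-th iteration of \eqref{opt-scheme-Nesterov-variable-smoothing} as one step of Nesterov's accelerated gradient method applied to the convex smooth function $F_k := F^{\rho_k,\mu_k} = {}^{\rho_k}f + {}^{\mu_k}g\circ K$. By Subsection \ref{subsectionPrimalSmooth}, $F_k$ is differentiable with $\nabla F_k(y) = \Prox_{\frac{1}{\rho_k}f^*}(y/\rho_k) + K^*\Prox_{\frac{1}{\mu_k}g^*}(Ky/\mu_k)$, Lipschitz continuous with constant $L_k = \tfrac{1}{\rho_k} + \tfrac{\|K\|^2}{\mu_k} = k(a+b\|K\|^2)$, so that indeed $x_k = y_k - \tfrac{1}{L_k}\nabla F_k(y_k)$. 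Combining \eqref{ineq-nest} for $F_k$ with the convexity of $F_k$ gives the standard proximal--gradient inequality $F_k(x_k) \le F_k(z) + L_k\langle y_k - x_k,\, y_k - z\rangle - \tfrac{L_k}{2}\|y_k - x_k\|^2$ for every $z \in \h$. I would also record the elementary facts $t_k(t_k-1) = t_{k-1}^2$, $t_k^2 = t_{k-1}^2 + t_k$, $\tfrac{k+1}{2} \le t_k \le k$ and $t_k^2 \le \tfrac{k(k+1)}{2}$, which all follow by induction from $t_{k+1} = \tfrac{1+\sqrt{1+4t_k^2}}{2}$.

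The heart of the proof is a FISTA-type one-step estimate. Writing the proximal--gradient inequality for $k \ge 2$ once with $z = x^*$ and once with $z = x_{k-1}$, forming their combination with weights $1$ and $t_k-1$, multiplying by $t_k$ and using $t_k(t_k-1) = t_{k-1}^2$, then introducing
\[
u_k := t_k y_k - (t_k-1)x_{k-1} - x^*, \qquad b_k := t_k x_k - (t_k-1)x_{k-1} - x^*
\]
(so that $t_k(y_k - x_k) = u_k - b_k$) together with the identity $2\langle v,w\rangle - \|v\|^2 = \|w\|^2 - \|w-v\|^2$, applied with $v = u_k - b_k$ and $w = u_k$, one obtains
\[
t_k^2 F_k(x_k) + \tfrac{L_k}{2}\|b_k\|^2 \;\le\; t_{k-1}^2 F_k(x_{k-1}) + t_k F_k(x^*) + \tfrac{L_k}{2}\|u_k\|^2 .
\]
A direct computation from $y_k = x_{k-1} + \tfrac{t_{k-1}-1}{t_k}(x_{k-1}-x_{k-2})$ shows $u_k = b_{k-1}$. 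Since $F_k \le F$ everywhere (by \eqref{ineq-smoothing-different-parameter2}), $F_k(x^*) \le F^* := F(x^*)$; and since $\rho_k \le \rho_{k-1}$, $\mu_k \le \mu_{k-1}$, estimate \eqref{ineq-smoothing-different-parameter} yields $F_k(x_{k-1}) \le F_{k-1}(x_{k-1}) + (\rho_{k-1}-\rho_k)\tfrac{L_f^2}{2} + (\mu_{k-1}-\mu_k)\tfrac{L_g^2}{2} = F_{k-1}(x_{k-1}) + \tfrac{1}{k(k-1)}G$, where $G := \tfrac{L_f^2}{2a} + \tfrac{L_g^2}{2b}$. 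Subtracting $t_k^2 F^* = (t_{k-1}^2+t_k)F^*$ and writing $S_k := t_k^2(F_k(x_k)-F^*)$, $V_k := S_k + \tfrac{L_k}{2}\|b_k\|^2$, this becomes
\[
V_k \;\le\; V_{k-1} + \tfrac{L_k - L_{k-1}}{2}\|b_{k-1}\|^2 + \tfrac{t_{k-1}^2}{k(k-1)}G \;=\; V_{k-1} + \tfrac{a+b\|K\|^2}{2}\|b_{k-1}\|^2 + \tfrac{t_{k-1}^2}{k(k-1)}G .
\]

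The step I expect to be the main obstacle is that $L_k = k(a+b\|K\|^2)$ \emph{grows} with $k$, so the quadratic terms do not telescope cleanly and a residual $\tfrac{a+b\|K\|^2}{2}\|b_{k-1}\|^2$ is left behind at each step. I would dispose of it by feeding the two-sided envelope bound \eqref{ineq-smoothing-different-parameter2} back in: it gives $F_{k-1}(x_{k-1}) \ge F(x_{k-1}) - \tfrac{G}{k-1} \ge F^* - \tfrac{G}{k-1}$, hence $S_{k-1} \ge -\tfrac{t_{k-1}^2}{k-1}G$ and therefore
\[
\tfrac{a+b\|K\|^2}{2}\|b_{k-1}\|^2 = \tfrac{1}{k-1}\bigl(V_{k-1} - S_{k-1}\bigr) \;\le\; \tfrac{V_{k-1}}{k-1} + \tfrac{t_{k-1}^2}{(k-1)^2}G .
\]
Substituting and using $t_{k-1}^2 \le \tfrac{(k-1)k}{2}$ turns the recursion into $V_k \le \tfrac{k}{k-1}V_{k-1} + \tfrac{2k-1}{2(k-1)}G$; equivalently $W_k := V_k/k$ satisfies $W_k \le W_{k-1} + \tfrac{G}{2}\bigl(\tfrac1k + \tfrac1{k-1}\bigr)$ for $k \ge 2$. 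The case $k=1$ is just the elementary accelerated-gradient estimate: the proximal--gradient inequality with $z = x^*$, $y_1 = x_0$, $t_1 = 1$ and $b_1 = x_1 - x^*$ gives $V_1 = F_1(x_1) - F^* + \tfrac{L_1}{2}\|x_1-x^*\|^2 \le \tfrac{L_1}{2}\|x_0-x^*\|^2 = \tfrac{a+b\|K\|^2}{2}\|x_0-x^*\|^2$, so $W_1 \le \tfrac{a+b\|K\|^2}{2}\|x_0-x^*\|^2$. Telescoping and estimating $\sum_{j=2}^{k}\bigl(\tfrac1j + \tfrac1{j-1}\bigr) \le 1 + 2\ln k$ then yields $W_k \le \tfrac{a+b\|K\|^2}{2}\|x_0-x^*\|^2 + \tfrac{G}{2}(1+2\ln k)$.

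Finally I would convert back. Since $V_k \ge S_k = t_k^2(F_k(x_k)-F^*)$ and $t_k^2 \ge \tfrac{(k+1)^2}{4}$, it follows that $F_k(x_k) - F^* \le \tfrac{k}{t_k^2}W_k \le \tfrac{4}{k+1}W_k$, which is already $\O(\tfrac{\ln k}{k})$; adding the smoothing gap $F(x_k) \le F_k(x_k) + \rho_k\tfrac{L_f^2}{2} + \mu_k\tfrac{L_g^2}{2} = F_k(x_k) + \tfrac{G}{k}$ from \eqref{ineq-smoothing-different-parameter2}, absorbing this $\O(1/k)$ term into the constant carrying the logarithm (using $\tfrac{1}{2(k+1)} \le \tfrac{1}{k+2}$), and re-indexing $k \mapsto k+1$ (using $t_{k+1}^2 \ge \tfrac{(k+2)^2}{4}$ and $\tfrac{k+1}{(k+2)^2} \le \tfrac{1}{k+2}$) produces exactly \eqref{scheme2-upper-bound}, hence the rate $\O(\tfrac{\ln k}{k})$. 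Besides the residual-term argument above, the only remaining work is the careful bookkeeping of constants needed to land precisely on the coefficients $\tfrac{2(a+b\|K\|^2)}{k+2}$ and $\tfrac{2(1+\ln(k+1))}{k+2}\bigl(\tfrac{L_f^2}{a} + \tfrac{L_g^2}{b}\bigr)$.
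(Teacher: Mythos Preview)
Your argument is correct and reaches exactly \eqref{scheme2-upper-bound}, but it is organized differently from the paper's proof. The paper tracks the Lyapunov quantity
\[
\left\| p_k - x_k + x^* \right\|^2 \;+\; \frac{2t_k^2}{L_k}\Bigl(F^k(x_k) - F^k(x^*) + \rho_k\tfrac{L_f^2}{2} + \mu_k\tfrac{L_g^2}{2}\Bigr),
\]
i.e., it divides the function term by $L_k$ rather than multiplying the quadratic term by $L_k$. Because the bracketed expression is nonnegative (this is exactly $F(x_k)-F(x^*)\ge 0$ after invoking \eqref{ineq-smoothing-different-parameter2}), the paper may replace $1/L_{k+1}$ by the larger $1/L_k$ in front of it, and the recursion becomes \emph{purely additive}: the Lyapunov quantity at step $k+1$ is bounded by the one at step $k$ plus $\frac{2t_{k+1}}{L_{k+1}}\bigl(\rho_{k+1}\tfrac{L_f^2}{2}+\mu_{k+1}\tfrac{L_g^2}{2}\bigr)$. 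Summing directly yields \eqref{ineq-fval-estimate-variable-smoothing} and then \eqref{scheme2-upper-bound}.

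Your choice $V_k = t_k^2(F_k(x_k)-F^*) + \tfrac{L_k}{2}\|b_k\|^2$ instead leaves behind the residual $\tfrac{L_k-L_{k-1}}{2}\|b_{k-1}\|^2$, which you then eliminate by writing it as $\tfrac{1}{k-1}(V_{k-1}-S_{k-1})$, bounding $S_{k-1}$ from below via \eqref{ineq-smoothing-different-parameter2}, and passing to the normalized sequence $W_k=V_k/k$. This is perfectly valid and the constant bookkeeping you sketch does close exactly on the stated bound (after using $\tfrac{1}{k+1}\le \tfrac{2}{k+2}$ and $2G=\tfrac{L_f^2}{a}+\tfrac{L_g^2}{b}$). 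The trade--off is that the paper's normalization avoids the residual term altogether and gives a cleaner additive telescoping with slightly less work, whereas your route is a standard ``FISTA with growing $L_k$'' argument that stays closer to the usual $V_k$-formulation and makes the role of the growing Lipschitz constant explicit. Both approaches rely on the same two-sided envelope estimates \eqref{ineq-smoothing-different-parameter}--\eqref{ineq-smoothing-different-parameter2} and on the identities $t_{k}^2=t_{k-1}^2+t_k$, $\tfrac{k+1}{2}\le t_k\le k$.
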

\begin{proof}
For any $k \geq 1$ we denote $F^{k}:=F^{\rho_{k},\mu_{k}}$, $p_k:=(t_k-1)(x_{k-1} - x_k)$ and
$$\xi_k:= \nabla F^{k}(y_k) = \Prox_{\frac{1}{\rho_k}f^*}\left( \frac{y_k}{\rho_k}\right) + K^*\Prox_{\frac{1}{\mu_k}g^*}\left( \frac{Ky_k}{\mu_k} \right).$$
For any $k \geq 1$ it holds
\begin{align*}
	p_{k+1} -x_{k+1}
	&= (t_{k+1} - 1)(x_k-x_{k+1}) - x_{k+1} \\
	&= (t_{k+1} - 1)x_k - t_{k+1} \left(y_{k+1} - \frac{1}{L_{k+1}} \nabla F^{k+1}(y_{k+1}) \right) \\
	&= p_k - x_k +  \frac{t_{k+1}}{L_{k+1}} \nabla F^{k+1}(y_{k+1})
\end{align*}
and from here it follows
$$\left\| p_{k+1} -x_{k+1} +x^*  \right\|^2$$
$$= \left\| p_k - x_k +x^*  \right\|^2 + 2\<p_k - x_k +x^*, \frac{t_{k+1}}{L_{k+1}} \xi_{k+1}  \> + \left(\frac{t_{k+1}}{L_{k+1}}\right)^2 \left\| \xi_{k+1} \right\|^2$$
$$=\left\| p_k - x_k +x^*  \right\|^2 + \frac{2t_{k+1}}{L_{k+1}}\<p_k,  \xi_{k+1} \>$$
$$+\frac{2t_{k+1}}{L_{k+1}}\<x^*-y_{k+1}-\frac{p_k}{t_{k+1}},  \xi_{k+1} \> + \left(\frac{t_{k+1}}{L_{k+1}}\right)^2 \left\| \xi_{k+1} \right\|^2$$
$$=\left\| p_k - x_k +x^*  \right\|^2 + \frac{2(t_{k+1}-1)}{L_{k+1}}\<p_k,  \xi_{k+1} \> +\frac{2t_{k+1}}{L_{k+1}}\<x^*-y_{k+1},  \xi_{k+1}  \> + \left(\frac{t_{k+1}}{L_{k+1}}\right)^2 \left\| \xi_{k+1} \right\|^2.$$
Further, using \eqref{ineq-nest}, since $x_{k+1}=y_{k+1}-\frac{1}{L_{k+1}}\xi_{k+1}$, it follows
	\begin{align}\label{ineq-gradient-norm}
		F^{k+1}(x_{k+1})
		& \leq F^{k+1}(y_{k+1})  + \< \xi_{k+1},x_{k+1}-y_{k+1}\> + \frac{L_{k+1}}{2}\left\| x_{k+1}-y_{k+1} \right\|^2 \notag \\
		&= F^{k+1}(y_{k+1}) -\frac{1}{L_{k+1}} \left\| \xi_{k+1} \right\|^2 + \frac{1}{2L_{k+1}} \left\| \xi_{k+1} \right\|^2 \notag \\
		&= F^{k+1}(y_{k+1}) - \frac{1}{2L_{k+1}} \left\| \xi_{k+1} \right\|^2
	\end{align}
and, from here, by making use of the convexity of $F^{k+1}$, we have
\begin{align}\label{ineq1}
	\< x^* - y_{k+1}, \xi_{k+1}\> &\leq F^{k+1}(x^*) - F^{k+1}(y_{k+1}) \notag \\
	&\overset{\mathclap{\eqref{ineq-gradient-norm}}}{\leq} F^{k+1}(x^*) - F^{k+1}(x_{k+1}) - \frac{1}{2L_{k+1}} \left\| \xi_{k+1} \right\|^2 \ \forall k \geq 1.
\end{align}
On the other hand, since $F^{k+1}(x_{k}) - F^{k+1}(y_{k+1}) \geq \< \xi_{k+1}, x_k - y_{k+1} \>$, we obtain
\begin{align}\label{ineq2}
	\left\| \xi_{k+1} \right\|^2 &\overset{\mathclap{\eqref{ineq-gradient-norm}}}{\leq} 2L_{k+1} (F^{k+1}(y_{k+1}) - F^{k+1}(x_{k+1})) \notag \\
	&\leq 2L_{k+1} \left(F^{k+1}(x_{k}) - F^{k+1}(x_{k+1}) -\frac{1}{t_{k+1}} \< \xi_{k+1},p_k \> \right) \ \forall k \geq 1.
\end{align}
Thus, as $t_{k+1}^2-t_{k+1}=t_k^2$ and by making use of \eqref{ineq-smoothing-different-parameter}, for any $k \geq 1$ it yields
\begin{align*}
	&\left\| p_{k+1} -x_{k+1} +x^*  \right\|^2 - \left\| p_k - x_k +x^*  \right\|^2 \\
	&\overset{\mathclap{\eqref{ineq1}}}{\leq} \frac{2(t_{k+1}-1)}{L_{k+1}}\<p_k,  \xi_{k+1} \> +\frac{2t_{k+1}}{L_{k+1}}(F^{k+1}(x^*) - F^{k+1}(x_{k+1})) + \frac{t_{k+1}^2-t_{k+1}}{L_{k+1}^2} \left\| \xi_{k+1} \right\|^2 \\
	&\overset{\mathclap{\eqref{ineq2}}}{\leq} \frac{2t_{k+1}}{L_{k+1}}(F^{k+1}(x^*) - F^{k+1}(x_{k+1})) + \frac{2(t_{k+1}^2-t_{k+1})}{L_{k+1}}(F^{k+1}(x_k) - F^{k+1}(x_{k+1})) \\
	&\overset{\mathclap{\eqref{ineq-smoothing-different-parameter}}}{\leq} \frac{2t_k^2}{L_{k+1}}\left(F^{k}(x_k) - F^{k}(x^*) + (\rho_k - \rho_{k+1})\frac{L_f^2}{2} + (\mu_k - \mu_{k+1})\frac{L_g^2}{2}\right)\\
    & - \frac{2t_{k+1}^2}{L_{k+1}}(F^{k+1}(x_{k+1}) - F^{k+1}(x^*))\displaybreak\\
    & = \frac{2t_k^2}{L_{k+1}}\left(F^{k}(x_k) - F^{k}(x^*) + \rho_k \frac{L_f^2}{2} + \mu_k \frac{L_g^2}{2}\right) - \frac{2t_{k+1}^2}{L_{k+1}}(F^{k+1}(x_{k+1}) - F^{k+1}(x^*))\\
    & - \frac{2t_k^2}{L_{k+1}}\left(\rho_{k+1} \frac{L_f^2}{2} + \mu_{k+1}\frac{L_g^2}{2}\right).
\end{align*}
By using \eqref{ineq-smoothing-different-parameter2} it follows that for any $k \geq 1$
$$F^{k}(x_k) - F^{k}(x^*) + \rho_k \frac{L_f^2}{2} + \mu_k \frac{L_g^2}{2} \geq F(x_k) - F^k(x^*) \geq F(x_k) - F(x^*) \geq 0,$$
thus
\begin{align*}
	&\left\| p_{k+1} -x_{k+1} +x^*  \right\|^2 - \left\| p_k - x_k +x^*  \right\|^2 \\
    & \leq \frac{2t_k^2}{L_{k}}\left(F^{k}(x_k) - F^{k}(x^*) + \rho_k \frac{L_f^2}{2} + \mu_k \frac{L_g^2}{2}\right) - \frac{2t_{k+1}^2}{L_{k+1}}(F^{k+1}(x_{k+1}) - F^{k+1}(x^*))\\
    & - \frac{2t_k^2}{L_{k+1}}\left(\rho_{k+1} \frac{L_f^2}{2} + \mu_{k+1}\frac{L_g^2}{2}\right)\\
    & = \frac{2t_k^2}{L_{k}}\left(F^{k}(x_k) - F^{k}(x^*) + \rho_k \frac{L_f^2}{2} + \mu_k \frac{L_g^2}{2}\right) - \frac{2t_{k+1}^2}{L_{k+1}}(F^{k+1}(x_{k+1}) - F^{k+1}(x^*)) \\
    & - \frac{2t_{k+1}^2}{L_{k+1}} \left( \rho_{k+1} \frac{L_f^2}{2} + \mu_{k+1} \frac{L_g^2}{2} \right) + \frac{2t_{k+1}}{L_{k+1}} \left( \rho_{k+1} \frac{L_f^2}{2} + \mu_{k+1} \frac{L_f^g}{2} \right),
\end{align*}
which implies that
\begin{align*}
	&\left\| p_{k+1} -x_{k+1} +x^*  \right\|^2 + \frac{2t_{k+1}^2}{L_{k+1}}\left(F^{k+1}(x_{k+1}) - F^{k+1}(x^*) + \rho_{k+1} \frac{L_f^2}{2} + \mu_{k+1} \frac{L_g^2}{2} \right)\\
    & \leq \left\| p_k - x_k +x^*  \right\|^2 + \frac{2t_k^2}{L_{k}}\left(F^{k}(x_k) - F^{k}(x^*) +  \rho_k \frac{L_f^2}{2} + \mu_k \frac{L_g^2}{2} \right)\\
    & + \frac{2t_{k+1}}{L_{k+1}} \left( \rho_{k+1} \frac{L_f^2}{2} + \mu_{k+1} \frac{L_g^2}{2} \right).
\end{align*}
Making again use of \eqref{ineq-smoothing-different-parameter2} this further yields for any $k \geq 1$
\begin{align}\label{ineq-fval-estimate-variable-smoothing1}
	& \frac{2t_{k+1}^2}{L_{k+1}}\left( F(x_{k+1})-F(x^*) \right) \notag \\
	& \leq \frac{2t_{k+1}^2}{L_{k+1}}\left(F^{k+1}(x_{k+1})-F^{k+1}(x^*) + \rho_{k+1} \frac{L_f^2}{2} + \mu_{k+1} \frac{L_g^2}{2} \right) + \left\| p_{k+1} -x_{k+1} +x^*  \right\|^2 \notag \\
	& \leq \frac{2t_1^2}{L_{1}}\left(F^{1}(x_1) - F^{1}(x^*) + \rho_1 \frac{L_f^2}{2} + \mu_1 \frac{L_g^2}{2}\right) + \left\| p_1 -x_1 +x^*  \right\|^2 \notag \\
	& + \sum_{s=1}^{k}\frac{2t_{s+1}}{L_{s+1}} \left( \rho_{s+1} \frac{L_f^2}{2} + \mu_{s+1} \frac{L_g^2}{2} \right).
\end{align}
Since $x_1 = y_1 - \frac{1}{L_1} \nabla F^{1}(y_1)$ and
\begin{align*}
	F^{1}(x^*) &\geq F^{1}(y_1) + \< \nabla F^{1}(y_1) , x^*-y_1\> \\
	F^{1}(x_1) &\leq F^{1}(y_1) + \< \nabla F^{1}(y_1) , x_1-y_1 \> + \frac{L_1}{2} \left\| x_1-y_1\right\|^2,
\end{align*}
we get
\begin{align*}
    &\frac{2t^2_1}{L_{1}}\left(F^{1}(x_1) - F^{1}(x^*)\right) + \left\| p_1 -x_1 +x^*  \right\|^2 \notag \\
	& \leq  2 \langle x_1-y_1,x^*-y_1\rangle -\|x_1 - y_1\|^2  + \|x_1-x^*\|^2 = \|y_1-x^*\|^2 = \|x_0 - x^*\|^2
\end{align*}
and this, together with \eqref{ineq-fval-estimate-variable-smoothing1}, give rise to the following estimate
\begin{align}\label{ineq-fval-estimate-variable-smoothing}
\frac{2t_{k+1}^2}{L_{k+1}}\left( F(x_{k+1})-F(x^*) \right) \leq \left\|x_0 - x^*\right\|^2 + \sum_{s=1}^{k+1}\frac{t_{s}}{L_{s}} \left( \rho_{s}{L_f^2}+ \mu_{s}{L_g^2}\right).
\end{align}
Furthermore, since $t_{k+1} \geq \frac{1}{2} + t_k$ for any $k \geq 1$, it follows that $t_{k+1} \geq \frac{k+2}{2}$, which, along with the fact that
$L_k = \frac{1}{\rho_k} + \frac{\left\| K \right\|^2}{\mu_k} = (a + b\left\| K\right\|^2)k$, lead for any $k \geq 1$  to the following estimate
\begin{align*}
	& F(x_{k+1})-F(x^*) \\
	&\leq \frac{2(a + b\left\|K\right\|^2)(k+1)}{(k+2)^2} \left( \left\|x_0 - x^*\right\|^2 + L_f^2\sum_{s=1}^{k+1}\frac{t_{s}\rho_{s}}{L_{s}} +  L_g^2\sum_{s=1}^{k+1}\frac{t_{s}\mu_{s}}{L_{s}} \right) \\
	&\leq \frac{2(a + b\left\|K\right\|^2)}{k+2} \left\| x_0 - x^*\right\|^2 + \frac{2}{k+2}\sum_{s=1}^{k+1}\frac{t_{s}}{s^2} \left (\frac{L_f^2}{a}  + \frac{L_f^2}{b} \right).
\end{align*}
Using now that $t_{k+1} \leq 1+t_k$ for any $k \geq 1$, it yields that $t_{k+1} \leq k+1$ for any $k \geq 0$, thus
\begin{align*}
	\sum_{s=1}^{k+1}\frac{t_{s}}{s^2} \leq \sum_{s=1}^{k+1}\frac{1}{s}
	\leq 1 + \sum_{s=2}^{k+1}\int_{s-1}^s{\frac{1}{x}\,\mathrm{d}x} = 1 + \int_{1}^{k+1}{\frac{1}{x}\,\mathrm{d}x} = 1+ \ln(k+1).
\end{align*}
Finally, we obtain that
\begin{align*}
	F(x_{k+1})-F(x^*)
	\leq \frac{2(a + b\left\|K\right\|^2)}{k+2} \left\| x_0 - x^*\right\|^2 + \frac{2(1+\ln(k+1))}{k+2} \left(\frac{L_f^2}{a} + \frac{L_g^2}{b} \right) \ \forall k \geq 1,
\end{align*}
which concludes the proof.
\end{proof}
In the second part of this subsection we propose a variant of algorithm \eqref{opt-scheme-Nesterov-variable-smoothing} formulated with  constant smoothing parameters:
\begin{empheq}[box=\fbox]{align}\label{opt-scheme-Nesterov}
\text{Initialization} : & \ t_1=1, \ y_1=x_0 \in \h, \ \rho,\mu \in \R_{++}, \tag{A2} \\
                       &  \ L(\rho,\mu)=\frac{1}{\rho} + \frac{\left\| K \right\|^2}{\mu} \notag\\
\text{For} \ k \geq 1 : & \ x_k = y_k - \frac{1}{L(\rho,\mu)} \left( \Prox_{\frac{1}{\rho}f^*}\left( \frac{y_k}{\rho}\right) + K^*\Prox_{\frac{1}{\mu}g^*}\left( \frac{Ky_k}{\mu} \right) \right), \notag \\
		               &  \ t_{k+1} = \frac{1+\sqrt{1+4t_k^2}}{2}, \notag \\
		               &  \ y_{k+1} = x_k + \frac{t_k-1}{t_{k+1}}(x_k - x_{k-1}) \notag
\end{empheq}
Constant smoothing parameters have been also used in \cite{DevGliNes12} and \cite{BotHendrich12,BotHendrich12b} within the framework of double smoothing algorithms, which assume the regularization in two steps of the Fenchel dual problem to $(P)$ and, consequently, the solving of an unconstrained optimization problem with a strongly convex and differentiable objective function having a Lipschitz continuous gradient.

\begin{theorem}\label{convergence-constant-smoothing}
Let $f:\h \rightarrow \R$ be a convex and $L_f$-Lipschitz continuous function, $g:\hk \rightarrow \R$ a convex and $L_g$-Lipschitz continuous function, $K: \h \rightarrow \hk$ a linear continuous operator and $x^*\in\h$ an optimal solution to $(P)$. Then, when choosing for $\varepsilon>0$
$$\rho=\frac{2\varepsilon}{3L_f^2} \ \mbox{and} \ \mu=\frac{2\varepsilon}{3L_g^2},$$
algorithm \eqref{opt-scheme-Nesterov} generates a sequence $(x_k)_{k\geq1} \subseteq \h$ which provides an $\varepsilon$-optimal solution to $(P)$ with a rate of convergence for the objective of order $\O(\tfrac{1}{k})$.
\end{theorem}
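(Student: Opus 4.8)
The plan is to exploit the fact that, with the smoothing parameters frozen at constant values $\rho$ and $\mu$, the recursion \eqref{opt-scheme-Nesterov} is exactly Nesterov's accelerated gradient method applied to the \emph{single} smooth convex function $F^{\rho,\mu} = {}^{\rho}f + {}^{\mu}g\circ K$, whose gradient $\nabla F^{\rho,\mu}$ is $L(\rho,\mu)$-Lipschitz continuous as established in Subsection \ref{subsectionPrimalSmooth}. So I would either invoke the classical $\O(1/k^2)$ estimate for this scheme, or—more in the spirit of the proof of Theorem \ref{assertion-convergence-variable-smoothing}—specialize the chain of inequalities there by setting $\rho_k\equiv\rho$, $\mu_k\equiv\mu$, $L_k\equiv L(\rho,\mu)$. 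Then every increment $\rho_k-\rho_{k+1}$ and $\mu_k-\mu_{k+1}$ vanishes, \eqref{ineq-smoothing-different-parameter} becomes an identity with $F^{k}=F^{k+1}=F^{\rho,\mu}$, the error sum in \eqref{ineq-fval-estimate-variable-smoothing1}–\eqref{ineq-fval-estimate-variable-smoothing} disappears, and the telescoping collapses to
\begin{equation*}
\frac{2t_{k+1}^2}{L(\rho,\mu)}\bigl(F^{\rho,\mu}(x_{k+1}) - F^{\rho,\mu}(x^*)\bigr) \leq \|x_0 - x^*\|^2 .
\end{equation*}
Using $t_{k+1}\geq\frac{k+2}{2}$ this gives $F^{\rho,\mu}(x_{k+1}) - F^{\rho,\mu}(x^*) \leq \frac{2L(\rho,\mu)}{(k+2)^2}\|x_0 - x^*\|^2$.

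Next I would transfer this estimate back to the original objective $F$ by means of the two-sided sandwich \eqref{ineq-smoothing-different-parameter2}: since $F^{\rho,\mu}(x^*)\leq F(x^*)$ and $F(x_{k+1})\leq F^{\rho,\mu}(x_{k+1}) + \rho\frac{L_f^2}{2} + \mu\frac{L_g^2}{2}$, one obtains
\begin{equation*}
F(x_{k+1}) - F(x^*) \leq \frac{2L(\rho,\mu)}{(k+2)^2}\|x_0 - x^*\|^2 + \rho\frac{L_f^2}{2} + \mu\frac{L_g^2}{2} .
\end{equation*}

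Finally I would substitute $\rho = \frac{2\varepsilon}{3L_f^2}$ and $\mu = \frac{2\varepsilon}{3L_g^2}$, so that $\rho\frac{L_f^2}{2} = \mu\frac{L_g^2}{2} = \frac{\varepsilon}{3}$ and $L(\rho,\mu) = \frac{3}{2\varepsilon}(L_f^2 + \|K\|^2 L_g^2)$, which yields
\begin{equation*}
F(x_{k+1}) - F(x^*) \leq \frac{3(L_f^2 + \|K\|^2 L_g^2)}{\varepsilon\,(k+2)^2}\,\|x_0 - x^*\|^2 + \frac{2\varepsilon}{3} ,
\end{equation*}
and I would conclude that the first summand falls below $\frac{\varepsilon}{3}$—hence $F(x_{k+1}) - F(x^*) \leq \varepsilon$—as soon as $k+2 \geq \frac{3\|x_0-x^*\|}{\varepsilon}\sqrt{L_f^2 + \|K\|^2 L_g^2}$, i.e. after $\O(1/\varepsilon)$ iterations, which is precisely the claimed $\O(1/k)$ rate.

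There is essentially no serious obstacle: the only point needing care is reproducing (or correctly citing) the accelerated-gradient $\O(1/k^2)$ bound for the fixed function $F^{\rho,\mu}$ together with $t_{k+1}\geq\frac{k+2}{2}$; everything else is a substitution and the elementary inequality \eqref{ineq-smoothing-different-parameter2}. One should, however, be mildly careful about the phrasing: for fixed $\varepsilon$ the bound does not tend to $0$ as $k\to\infty$, so the "$\O(1/k)$ rate" must be understood in the standard complexity sense that the iteration count required to reach accuracy $\varepsilon$ scales like $1/\varepsilon$.
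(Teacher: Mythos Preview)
Your proposal is correct and follows essentially the same strategy as the paper: specialize the proof of Theorem~\ref{assertion-convergence-variable-smoothing} to constant parameters, obtain the estimate
\[
F(x_{k+1}) - F(x^*) \leq \frac{2L(\rho,\mu)\|x_0-x^*\|^2}{(k+2)^2} + \frac{\rho L_f^2 + \mu L_g^2}{2},
\]
and then balance the two terms with the stated choices of $\rho$ and $\mu$. The only bookkeeping difference is where the smoothing error enters: you branch off \emph{before} \eqref{ineq-fval-estimate-variable-smoothing1}, obtain the clean Nesterov bound $\frac{2t_{k+1}^2}{L(\rho,\mu)}(F^{\rho,\mu}(x_{k+1})-F^{\rho,\mu}(x^*))\leq\|x_0-x^*\|^2$, and then add the bias via \eqref{ineq-smoothing-different-parameter2}; the paper instead specializes \eqref{ineq-fval-estimate-variable-smoothing} itself (whose sum does \emph{not} vanish in the constant case, contrary to your wording) and simplifies it using the identity $t_{k+1}^2=\sum_{s=1}^{k+1}t_s$. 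Both routes yield the identical final estimate, so the distinction is cosmetic.
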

\begin{proof}
In order to prove this statement, one has only to reproduce the first part of the proof of Theorem \ref{assertion-convergence-variable-smoothing} when
$$\rho_k=\rho, \mu_k=\mu \ \mbox{and} \ L_k=L(\rho,\mu)=\frac{1}{\rho} + \frac{\left\| K \right\|^2}{\mu} \ \forall k \geq 1,$$
fact which leads to \eqref{ineq-fval-estimate-variable-smoothing}. This inequality reads in this particular situation
\begin{align*}
	F(x_{k+1}) - F(x^*) & \leq \frac{L(\rho,\mu)\left\|x_0 - x^* \right\|^2}{2t_{k+1}^2} + \frac{\rho L_f^2 + \mu L_g^2}{2t_{k+1}^2}\sum_{s=1}^{k+1} t_s \ \forall k \geq 1.
\end{align*}
Since $t_{k+1}^2 = t_k^2 + t_{k+1}$ for any $k \geq 1$, one can inductively prove that $t_{k+1}^2 = \sum_{s=1}^{k+1}{t_s}$, which, together with the fact that $t_{k+1} \geq \frac{k+2}{2}$ for any $k \geq 1$, yields
\begin{align*}
	F(x_{k+1}) - F(x^*) \leq \frac{2L(\rho,\mu)\left\|x_0 - x^* \right\|^2}{(k+2)^2} + \frac{\rho L_f^2 + \mu L_g^2}{2} \ \forall k \geq 1.
\end{align*}
In order to obtain $\varepsilon$-optimality for the objective of the problem $(P)$, where $\varepsilon >0$ is a given level of accuracy, we choose $\rho=\frac{2\varepsilon}{3L_f^2}$ and $\mu=\frac{2\varepsilon}{3L_g^2}$ and, thus, we have only to force the first term in the right-hand side of the above estimate to be less than or equal to $\frac{\varepsilon}{3}$. Taking also into account that in this situation $L(\rho,\mu)=\frac{3L_f^2 + 3L_g^2\|K\|^2}{2\varepsilon}$, it holds
\begin{align*}
	\frac{\varepsilon}{3} &\geq \frac{2L(\rho,\mu)\left\|x_0 - x^* \right\|^2}{(k+2)^2} = \frac{3\left(L_f^2 + L_g^2\|K\|^2 \right)\left\|x_0 - x^* \right\|^2}{\varepsilon(k+2)^2} \\
	\Leftrightarrow \frac{\varepsilon^2}{9} &\geq \frac{\left(L_f^2 + L_g^2\|K\|^2 \right)\left\|x_0 - x^* \right\|^2}{(k+2)^2} \\
	\Leftrightarrow \frac{\varepsilon}{3} &\geq \frac{\sqrt{L_f^2 + L_g^2\|K\|^2} \left\|x_0 - x^* \right\|}{k+2},
\end{align*}
which shows that an $\varepsilon$-optimal solution to $(P)$ can be provided with a rate of convergence for the objective of order $\O(\tfrac{1}{k})$.
\end{proof}
The rate of convergence of algorithm \eqref{opt-scheme-Nesterov-variable-smoothing} may not be as good as the one proved for the algorithm with constant smoothing parameters depending on a fixed level of accuracy $\varepsilon > 0$.  However, the main advantage of the variable smoothing methods is given by the fact that the sequence of objective values $(f(x_k) + g(Kx_k))_{k \geq 1}$ converges to the optimal objective value of $(P)$, whereas, when generated by algorithm \eqref{opt-scheme-Nesterov}, despite of the fact that it approximates the optimal objective value with a better convergence rate, this sequence may not converge to this.

\subsection{The case when $f$ is differentiable with Lipschitz continuous gradient}\label{subconstsmo}

In this subsection we show how the algorithms \eqref{opt-scheme-Nesterov-variable-smoothing} and \eqref{opt-scheme-Nesterov} for solving the problem $(P)$ can be adapted to the situation when $f$ is a differentiable function with Lipschitz continuous gradient. We provide iterative schemes with variable and constant smoothing variables and corresponding convergence statements. More precisely, we deal with the optimization problem
\begin{equation*}
\hspace{-1.8cm}(P) \quad \quad \inf_{x \in \h}{\left\{f(x)+g(Kx)\right\}},
\end{equation*}
where $K:\h \rightarrow \hk$ is a linear continuous operator, $f:\h \rightarrow \R$ is a convex and differentiable function with $L_{\nabla f}$-Lipschitz continuous gradient and $g:\hk \rightarrow \R$ is a convex and $L_g$-Lipschitz continuous function.

Algorithm \eqref{opt-scheme-Nesterov-variable-smoothing} can be adapted to this framework as follows:
\begin{empheq}[box=\fbox]{align}\label{diff-scheme-variable-smoothing}
\text{Initialization} : & \ t_1=1, \ y_1=x_0 \in \h,\ (\mu_k)_{k\geq1} \subseteq \R_{++}  \tag{A3} \\
\text{For} \ k \geq 1 : & \  L_k= L_{\nabla f} + \frac{\left\| K \right\|^2}{\mu_k}, \notag \\
		& \ x_k = y_k - \frac{1}{L_k} \left(\nabla f(y_k) + K^*\Prox_{\frac{1}{\mu_k}g^*}\left( \frac{Ky_k}{\mu_k} \right) \right), \notag \\
		& \ t_{k+1} = \frac{1+\sqrt{1+4t_k^2}}{2}, \notag \\
		& \ y_{k+1} = x_k + \frac{t_k-1}{t_{k+1}}(x_k - x_{k-1}) \notag
\end{empheq}
while its convergence is furnished by the following theorem.

\begin{theorem}\label{diff-convergence-variable-smoothing}
Let $f:\h \rightarrow \R$ be a convex and differentiable function with $L_{\nabla f}$-Lipschitz continuous gradient, $g:\hk \rightarrow \R$ a convex and $L_g$-Lipschitz continuous function,  $K: \h \rightarrow \hk$ a nonzero linear continuous operator and $x^*\in\h$ an optimal solution to $(P)$. Then, when choosing
$$\mu_k= \frac{1}{bk} \ \forall k \geq 1,$$
where $b \in \R_{++}$, algorithm \eqref{diff-scheme-variable-smoothing} generates a sequence $(x_k)_{k\geq1} \subseteq \h$ satisfying for any $k \geq 1$
\begin{align}\label{diff-scheme-upper-bound}
	F(x_{k+1})-F(x^*) & \leq \frac{2(L_{\nabla f} + b\left\|K\right\|^2)}{k+2}\left\|x_0 - x^*\right\|^2  + \frac{2(1+\ln(k+1))}{k+2}\frac{L_g^2(L_{\nabla f} + b\left\|K\right\|^2)}{b^2\left\|K\right\|^2},
\end{align}
thus yielding a rate of convergence for the objective of order $\O(\tfrac{\ln k}{k})$.
\end{theorem}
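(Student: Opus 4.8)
The plan is to imitate closely the proof of Theorem~\ref{assertion-convergence-variable-smoothing}, the only structural change being that here $f$ enters the smoothed objective unchanged, so that the whole smoothing error is carried by $g$. Accordingly I would set, for $k\ge 1$,
\[
F^k(x):=f(x)+{}^{\mu_k}g(Kx),\qquad \xi_k:=\nabla F^k(y_k)=\nabla f(y_k)+K^*\Prox_{\frac{1}{\mu_k}g^*}\!\left(\tfrac{Ky_k}{\mu_k}\right),\qquad p_k:=(t_k-1)(x_{k-1}-x_k),
\]
noting that $F^k$ is convex and differentiable with $L_k$-Lipschitz continuous gradient, $L_k=L_{\nabla f}+\tfrac{\|K\|^2}{\mu_k}$, and that by construction of \eqref{diff-scheme-variable-smoothing} one has $x_k=y_k-\tfrac{1}{L_k}\xi_k$. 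The analogues of \eqref{ineq-smoothing-different-parameter} and \eqref{ineq-smoothing-different-parameter2} then read, for every $x\in\h$ and every $k\ge 1$,
\[
F^{k+1}(x)\le F^k(x)\le F^{k+1}(x)+(\mu_k-\mu_{k+1})\tfrac{L_g^2}{2},\qquad F^k(x)\le F(x)\le F^k(x)+\mu_k\tfrac{L_g^2}{2},
\]
which follow by adding $f$ to both sides of the one-sided Moreau-envelope estimates for $g$ established in Section~\ref{subsectionPrimalSmooth}, the chain of inequalities being valid because $(\mu_k)_{k\ge1}$ is nonincreasing.

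With these ingredients the computations in the proof of Theorem~\ref{assertion-convergence-variable-smoothing} carry over verbatim: the recursion $p_{k+1}-x_{k+1}=p_k-x_k+\tfrac{t_{k+1}}{L_{k+1}}\xi_{k+1}$, the expansion of $\|p_{k+1}-x_{k+1}+x^*\|^2$, the descent inequality \eqref{ineq-nest} applied at $y_{k+1}$, its two consequences \eqref{ineq1} and \eqref{ineq2}, and the identity $t_{k+1}^2-t_{k+1}=t_k^2$. Since $f$ is no longer smoothed, there is simply no $\rho$-error term, and the telescoping estimate \eqref{ineq-fval-estimate-variable-smoothing} specializes to
\[
\frac{2t_{k+1}^2}{L_{k+1}}\bigl(F(x_{k+1})-F(x^*)\bigr)\le \|x_0-x^*\|^2+L_g^2\sum_{s=1}^{k+1}\frac{t_s\mu_s}{L_s}\qquad\forall k\ge1,
\]
the base step ($k=1$, involving $x_0,y_1,x_1$) being unaffected because it uses only convexity of $F^1$ and $L_1$-Lipschitzness of $\nabla F^1$.

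It then remains to insert $\mu_k=\tfrac{1}{bk}$, hence $L_k=L_{\nabla f}+b\|K\|^2k$, and to estimate. Using $t_{k+1}\ge\tfrac{k+2}{2}$ and $L_{\nabla f}+b\|K\|^2(k+1)\le(k+1)(L_{\nabla f}+b\|K\|^2)$ gives $\tfrac{L_{k+1}}{2t_{k+1}^2}\le\tfrac{2(L_{\nabla f}+b\|K\|^2)}{k+2}$; using $t_s\le s$ together with $L_s=L_{\nabla f}+b\|K\|^2s\ge b\|K\|^2s$ — the sole point where the hypothesis $K\neq 0$ is used — gives
\[
\sum_{s=1}^{k+1}\frac{t_s\mu_s}{L_s}\le\sum_{s=1}^{k+1}\frac{s}{bs\cdot b\|K\|^2 s}=\frac{1}{b^2\|K\|^2}\sum_{s=1}^{k+1}\frac{1}{s}\le\frac{1+\ln(k+1)}{b^2\|K\|^2},
\]
the last step being the integral comparison already used in Theorem~\ref{assertion-convergence-variable-smoothing}. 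Combining the two bounds yields exactly \eqref{diff-scheme-upper-bound}. There is no genuine obstacle in this argument; the only points requiring a moment's care are checking that the one-sided smoothing bounds survive the addition of the unsmoothed $f$, that the initialization of the telescoping argument is untouched, and that $K\neq 0$ is indeed needed to divide by $\|K\|^2$ at the end.
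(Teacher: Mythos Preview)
Your proposal is correct and follows essentially the same approach as the paper's own proof: the same definitions of $F^k$, $\xi_k$, $p_k$, the same reduction to the telescoping inequality $\tfrac{2t_{k+1}^2}{L_{k+1}}(F(x_{k+1})-F(x^*))\le\|x_0-x^*\|^2+L_g^2\sum_{s=1}^{k+1}\tfrac{t_s\mu_s}{L_s}$, and the same final estimates using $t_s\le s$, $L_s\ge b\|K\|^2s$, and the integral bound on $\sum_{s=1}^{k+1}\tfrac{1}{s}$. You also correctly identify the role of the hypothesis $K\neq 0$.
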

\begin{proof}
For any $k \geq 1$ we denote by $F^{k} : \h \rightarrow \R$, $F^k(x)=f(x) + {}^{\mu_k}g(Kx)$. For any $k \geq 1$ and every $x \in \h$ it holds $\nabla F^{k}(x) = \nabla f(x)+ K^*\Prox_{\frac{1}{\mu_k}g^*}\left( \frac{Kx}{\mu_k} \right)$ and $\nabla F^k$ is $L_k$-Lipschitz continuous, where $L_k=L_{\nabla f} + \frac{\left\| K \right\|^2}{\mu_k}$.

As in  the proof of Theorem \ref{assertion-convergence-variable-smoothing}, by defining $p_k:=(t_k-1)(x_{k-1} - x_k)$, we obtain for any $k \geq 1$
\begin{align*}
	&\left\| p_{k+1} -x_{k+1} +x^*  \right\|^2 - \left\| p_k - x_k +x^*  \right\|^2 \\
	& \leq \frac{2t_k^2}{L_{k+1}}\left (F^{k+1}(x_{k}) - F^{k+1}(x^*) \right) - \frac{2t_{k+1}^2}{L_{k+1}}(F^{k+1}(x_{k+1}) - F^{k+1}(x^*))\\
    &\leq \frac{2t_k^2}{L_{k+1}}\left (F^{k}(x_k) - F^{k+1}(x^*) + (\mu_k - \mu_{k+1})\frac{L_g^2}{2} \right) - \frac{2t_{k+1}^2}{L_{k+1}}(F^{k+1}(x_{k+1}) - F^{k+1}(x^*))\\
    & \leq \frac{2t_k^2}{L_{k+1}}\left (F^{k}(x_k) - F^{k}(x^*) + \mu_k \frac{L_g^2}{2} \right ) - \frac{2t_{k+1}^2}{L_{k+1}}(F^{k+1}(x_{k+1}) - F^{k+1}(x^*)) - \frac{t_k^2}{L_{k+1}}\mu_{k+1}L_g^2\\
    & \leq  \frac{2t_k^2}{L_{k}}\left (F^{k}(x_k) - F^{k}(x^*) + \mu_k \frac{L_g^2}{2} \right ) - \frac{2t_{k+1}^2}{L_{k+1}}(F^{k+1}(x_{k+1}) - F^{k+1}(x^*)) - \frac{t_k^2}{L_{k+1}}\mu_{k+1}L_g^2\\
    & = \frac{2t_k^2}{L_{k}}\left (F^{k}(x_k) - F^{k}(x^*) + \mu_k \frac{L_g^2}{2}  \right ) - \frac{2t_{k+1}^2}{L_{k+1}}(F^{k+1}(x_{k+1}) - F^{k+1}(x^*))\\
	&- \frac{t_{k+1}^2L_g^2}{L_{k+1}}\mu_{k+1} + \frac{t_{k+1}L_g^2}{L_{k+1}}\mu_{k+1}
\end{align*}
and, consequently,
\begin{align*}
	&\left\| p_{k+1} -x_{k+1} +x^*  \right\|^2 + \frac{2t_{k+1}^2}{L_{k+1}}\left(F^{k+1}(x_{k+1}) - F^{k+1}(x^*) + \mu_{k+1} \frac{L_g^2}{2} \right)\\
    & \leq \left\| p_k - x_k +x^*  \right\|^2 + \frac{2t_k^2}{L_{k}}\left(F^{k}(x_k) - F^{k}(x^*) + \mu_k \frac{L_g^2}{2} \right) + \frac{t_{k+1}L_g^2}{L_{k+1}}\mu_{k+1}.
\end{align*}
For any $k \geq 1$ it holds
\begin{align*}
	& \frac{2t_{k+1}^2}{L_{k+1}}\left( F(x_{k+1})-F(x^*) \right) \notag \\
	& \leq \frac{2t_{k+1}^2}{L_{k+1}}\left(F^{k+1}(x_{k+1})-F^{k+1}(x^*) + \mu_{k+1} \frac{L_g^2}{2} \right) + \left\| p_{k+1} -x_{k+1} +x^*  \right\|^2
\notag \\
	& \leq \frac{2t_1^2}{L_{1}}\left(F^{1}(x_1) - F^{1}(x^*) + \mu_1 \frac{L_g^2}{2}\right) + \left\| p_1 -x_1 +x^*  \right\|^2 \notag \\
	& + \sum_{s=1}^{k}\frac{t_{s+1}L_g^2}{L_{s+1}}\mu_{s+1},
\end{align*}
which yields
\begin{align}\label{diff-estimate-variable-smoothing}
\frac{2t_{k+1}^2}{L_{k+1}}\left( F(x_{k+1})-F(x^*) \right) \leq \left\|x_0 - x^*\right\|^2 + \sum_{s=1}^{k+1}\frac{t_{s}L_g^2}{L_{s}}\mu_{s}.
\end{align}
For any $k \geq 1$, since $t_{k+1} \geq \frac{k+2}{2}$ and
$L_k = L_{\nabla f} + \frac{\left\| K \right\|^2}{\mu_k} = L_{\nabla f} + b\left\|K\right\|^2k$, it follows
\begin{align*}
	& F(x_{k+1})-F(x^*) \\
	&\leq \frac{2(L_{\nabla f} + b\left\|K\right\|^2(k+1))}{(k+2)^2} \left( \left\|x_0 - x^*\right\|^2 + \sum_{s=1}^{k+1}\frac{t_{s}L_g^2}{(L_{\nabla f} + b\left\|K\right\|^2s)sb} \right).
\end{align*}
Thus, for any $k \geq 1$, since $t_k \leq k$, it yields
\begin{align*}
	& F(x_{k+1})-F(x^*) \\
	&\leq \frac{2(L_{\nabla f} + b\left\|K\right\|^2(k+1))}{(k+2)^2} \left( \left\|x_0 - x^*\right\|^2 + \sum_{s=1}^{k+1}\frac{L_g^2}{(L_{\nabla f} + b\left\|K\right\|^2s)b} \right)\\
&\leq \frac{2(L_{\nabla f} + b\left\|K\right\|^2(k+1))}{(k+2)^2} \left( \left\|x_0 - x^*\right\|^2 + \sum_{s=1}^{k+1}\frac{L_g^2}{b^2\left\|K\right\|^2s} \right)
\end{align*}
\begin{align*}
    &\leq \frac{2(L_{\nabla f} + b\left\|K\right\|^2(k+1))}{(k+2)^2} \left( \left\|x_0 - x^*\right\|^2 + \frac{L_g^2}{b^2\left\|K\right\|^2} (1+\ln(k+1)) \right)\\
    &\leq \frac{2(L_{\nabla f} + b\left\|K\right\|^2)}{k+2} \left( \left\|x_0 - x^*\right\|^2 + \frac{L_g^2}{b^2\left\|K\right\|^2} (1+\ln(k+1)) \right)\\
     &\leq \frac{2(L_{\nabla f} + b\left\|K\right\|^2)}{k+2}\left\|x_0 - x^*\right\|^2  + \frac{2(1+\ln(k+1))}{k+2}\frac{L_g^2(L_{\nabla f} + b\left\|K\right\|^2)}{b^2\left\|K\right\|^2}.
    \end{align*}
\end{proof}

By adapting \eqref{diff-scheme-variable-smoothing} to the framework considered in this subsection we obtain the following algorithm with constant smoothing variables:
\begin{empheq}[box=\fbox]{align}\label{diff-scheme-constant-smoothing}
\text{Initialization} : & \ t_1=1, \ y_1=x_0 \in \h, \ \mu \in \R_{++}, \tag{A4} \\
                       &  \ L(\mu)=L_{\nabla f} + \frac{\left\| K \right\|^2}{\mu} \notag\\
\text{For }	k \geq 1 :  & \ x_k = y_k - \frac{1}{L(\mu)} \left(\nabla f(y_k) + K^*\Prox_{\frac{1}{\mu}g^*}\left( \frac{Ky_k}{\mu} \right) \right), \notag \\
		               &  \ t_{k+1} = \frac{1+\sqrt{1+4t_k^2}}{2}, \notag \\
		               &  \ y_{k+1} = x_k + \frac{t_k-1}{t_{k+1}}(x_k - x_{k-1}) \notag
\end{empheq}
The convergence of algorithm \eqref{diff-scheme-constant-smoothing} is stated by the following theorem, which can be proved in the lines of the proof of Theorem \ref{diff-convergence-variable-smoothing}.

\begin{theorem}\label{diff-convergence-constant-smoothing}
Let $f:\h \rightarrow \R$ be a convex and differentiable function with $L_{\nabla f}$-Lipschitz continuous gradient, $g:\hk \rightarrow \R$ a convex and $L_g$-Lipschitz continuous function,  $K: \h \rightarrow \hk$ a nonzero linear continuous operator and $x^*\in\h$ an optimal solution to $(P)$. Then, when choosing for $\varepsilon>0$
$$\mu=\frac{\varepsilon}{L_g^2},$$
algorithm \eqref{diff-scheme-constant-smoothing} generates a sequence $(x_k)_{k\geq1} \subseteq \h$ which provides an $\varepsilon$-optimal solution to $(P)$ with a rate of convergence for the objective of order $\O(\tfrac{1}{k})$.
\end{theorem}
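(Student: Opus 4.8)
The plan is to mimic the proof of Theorem~\ref{diff-convergence-variable-smoothing}, specialized to the situation $\mu_k=\mu$ for every $k\geq1$, which forces $L_k=L(\mu)=L_{\nabla f}+\frac{\|K\|^2}{\mu}$ to be constant. First I would observe that with a constant smoothing parameter all the differences of the type $(\mu_k-\mu_{k+1})\tfrac{L_g^2}{2}$ appearing in the estimates of Theorem~\ref{diff-convergence-variable-smoothing} vanish, and all the places where the proof invoked $\mu_k\geq\mu_{k+1}$, respectively $L_k\leq L_{k+1}$, now hold with equality; hence the recursive inequality established there goes through a fortiori. Carrying it out one still arrives at \eqref{diff-estimate-variable-smoothing}, which in this case reads
$$\frac{2t_{k+1}^2}{L(\mu)}\left(F(x_{k+1})-F(x^*)\right)\leq \|x_0-x^*\|^2+\frac{\mu L_g^2}{L(\mu)}\sum_{s=1}^{k+1}t_s \quad \forall k\geq1.$$

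Next I would exploit, exactly as in the proof of Theorem~\ref{convergence-constant-smoothing}, the identity $t_{k+1}^2=\sum_{s=1}^{k+1}t_s$, which follows inductively from $t_{k+1}^2=t_k^2+t_{k+1}$, together with $t_{k+1}\geq\frac{k+2}{2}$ for $k\geq1$. Substituting both into the displayed inequality and dividing by $\frac{2t_{k+1}^2}{L(\mu)}$ yields
$$F(x_{k+1})-F(x^*)\leq \frac{2L(\mu)\|x_0-x^*\|^2}{(k+2)^2}+\frac{\mu L_g^2}{2}\quad \forall k\geq1,$$
which exhibits the announced $\O(\tfrac{1}{k})$-type behaviour for the objective, with a residual governed by the smoothing parameter $\mu$.

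Finally, to obtain $\varepsilon$-optimality I would choose $\mu=\frac{\varepsilon}{L_g^2}$, so that the second term above equals exactly $\frac{\varepsilon}{2}$, while $L(\mu)=L_{\nabla f}+\frac{\|K\|^2 L_g^2}{\varepsilon}$. It then suffices to force the first term to be at most $\frac{\varepsilon}{2}$, i.e.
$$\frac{\varepsilon}{2}\geq \frac{2\left(L_{\nabla f}+\tfrac{\|K\|^2 L_g^2}{\varepsilon}\right)\|x_0-x^*\|^2}{(k+2)^2}\Longleftrightarrow k+2\geq \frac{2\|x_0-x^*\|\sqrt{\varepsilon L_{\nabla f}+\|K\|^2 L_g^2}}{\varepsilon},$$
which shows that an $\varepsilon$-optimal solution to $(P)$ is reached after $\O(\tfrac{1}{\varepsilon})$ iterations, i.e. with a rate of convergence for the objective of order $\O(\tfrac{1}{k})$.

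I do not expect a genuine obstacle here, since the argument is a direct specialization. The only point that requires some care is re-deriving, in the constant-parameter regime, the telescoping inequality underlying \eqref{diff-estimate-variable-smoothing}: one should check that the monotonicity arguments ($\mu_k\geq\mu_{k+1}$, $L_k\leq L_{k+1}$) and the nonnegativity estimate $F^{k}(x_k)-F^{k}(x^*)+\mu_k\tfrac{L_g^2}{2}\geq F(x_k)-F(x^*)\geq0$, which rely on ${}^{\mu}g(Ky)\leq g(Ky)\leq {}^{\mu}g(Ky)+\mu\tfrac{L_g^2}{2}$, all remain valid when the parameters are frozen; and then one must keep track of the constants when balancing the two error terms against $\varepsilon$.
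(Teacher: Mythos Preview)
Your proposal is correct and follows exactly the approach the paper itself indicates: the paper does not give a separate proof of Theorem~\ref{diff-convergence-constant-smoothing} but states that it ``can be proved in the lines of the proof of Theorem~\ref{diff-convergence-variable-smoothing}'', and your argument carries this out precisely, specializing \eqref{diff-estimate-variable-smoothing} to constant $\mu$ and then using $t_{k+1}^2=\sum_{s=1}^{k+1}t_s$ and $t_{k+1}\geq\tfrac{k+2}{2}$ as in Theorem~\ref{convergence-constant-smoothing}.
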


\subsection{The optimization problem with the sum of more than two functions in the objective}\label{submore}

We close this section by discussing the employment of the algorithmic schemes presented in the previous two subsections to the optimization problem \eqref{eq:primal-sum}
\begin{equation*}
\inf_{x \in \h}{\left\{ f(x) + \sum_{i=1}^m{g_i(K_ix)}\right\}},
\end{equation*}
where $\h$ and $\hk_i$, $i=1,...,m$, are real Hilbert spaces, $f : \h \rightarrow \R$ is a convex and either $L_f$-Lipschitz continuous or differentiable with $L_{\nabla f}$-continuous gradient function, $g_i : \hk_i \rightarrow \R$ are convex and $L_{g_i}$-Lipschitz continuous functions and $K_i : \h \rightarrow \hk_i$, $i=1,...,m$, are linear continuous operators. By endowing $\hk:= \hk_1 \times ... \times \hk_m$ with the inner product defined as
$$\langle y,z \rangle = \sum_{i=1}^m \langle y_i,z_i \rangle \ \forall y,z \in \hk,$$
and with the corresponding norm and by defining $g : \hk \rightarrow \R, g(y_1,...,y_m) = \sum_{i=1}^m g_i(y_i)$ and $K : \h \rightarrow \hk, Kx = (K_1x,...,K_mx)$, problem \eqref{eq:primal-sum} can be equivalently written as
\begin{equation*}
\inf_{x \in \h}{\left\{ f(x) + g(Kx)\right\}}
\end{equation*}
and, consequently, solved via one of the variable or constant smoothing algorithms introduced in the subsections \ref{subvarsmo} and \ref{subconstsmo}, depending on the properties the function $f$ is endowed with.

In the following we determine the elements related to the above constructed function $g$ which appear in these iterative schemes and in the corresponding convergence statements. Obviously, the function $g$ is convex and, since
for every $(y_1,...,y_m), (z_1,...,z_m) \in \hk$
$$|g(y_1,...,y_m) - g(z_1,...,z_m)| \leq \sum_{i=1}^m L_{g_i}\|y_i-z_i\| \leq \left (\sum_{i=1}^m L^2_{g_i}\right)^{\frac{1}{2}}\|(y_1,...,y_m) - (z_1,...,z_m)\|,$$
it is $\left (\sum_{i=1}^m L^2_{g_i}\right)^{\frac{1}{2}}$-Lipschitz continuous. On the other hand, for each $\mu \in \R_{++}$ and $(y_1,...,y_m) \in \hk$ it holds
$${}^\mu g(y_1,...,y_m) = \sum_{i=1}^m {}^\mu g_i(y_i),$$
thus
\begin{align*}
\nabla({}^\mu g)(y_1,...,y_m) & = \left(\nabla({}^\mu g_1)(y_1),...,\nabla({}^\mu g_m)(y_m) \right)\\
                              & = \left(\Prox_{\frac{1}{\mu}g_i^*}\left(\frac{y_1}{\mu}\right),...,\Prox_{\frac{1}{\mu}g_m^*}\left( \frac{y_m}{\mu}\right) \right).
\end{align*}
Since $K^*(y_1,...,y_m) = \sum_{i=1}^m K_i^*y_i$, for every $(y_1,...,y_m) \in \hk$, we have
\begin{align*}
\nabla({}^\mu g \circ K)(x) = K^*\nabla({}^\mu g)(K_1x,...,K_mx) & = \sum_{i=1}^m K_i^*\nabla({}^\mu g_i)(K_ix)\\
                                                                 & =  \sum_{i=1}^m K_i^*\Prox_{\frac{1}{\mu}g_i^*} \left( \frac{K_ix}{\mu} \right) \ \forall x \in \h.
\end{align*}
Finally, we notice that for arbitrary $x,y \in \h$ one has
\begin{align*}
	\left\| \nabla({}^\mu g \circ K)(x) - \nabla({}^\mu g \circ K)(y) \right\|
	&= \left\| \sum_{i=1}^m K_i^*\nabla({}^\mu g_i)(K_ix) - \sum_{i=1}^m K_i^*\nabla({}^\mu g_i)(K_iy) \right\|  \\
	&\leq \sum_{i=1}^m \|K_i\| \left\|\nabla({}^\mu g_i)(K_ix) - \nabla({}^\mu g_i)(K_iy) \right\| \\
	&\leq \sum_{i=1}^m \frac{\left\| K_i \right\|}{\mu} \left\|K_ix - K_iy \right\| \leq \frac{\sum_{i=1}^m \left\| K_i \right\|^2}{\mu} \left\| x-y \right\|,
\end{align*}
which shows that the Lipschitz constant of $\nabla({}^\mu g \circ K)$ is $\frac{\sum_{i=1}^m \left\| K_i \right\|^2}{\mu}$.

\section{Numerical experiments}\label{sectionExample}
\subsection{Image processing}\label{subsectionImageProcessing}

The first numerical experiment involving the variable smoothing algorithm concerns the solving of an extremely ill-conditioned linear inverse problem which arises in the field of signal and image processing, by basically solving
the regularized nondifferentiable convex optimization problem
\begin{equation}\label{probimageproc}
\inf_{x \in \R^n}{\left\{ \left\| Ax-b \right\|_1 + \lambda \left\| Wx \right\|_1\right\}},
\end{equation}
where $b \in \R^n$ is the blurred and noisy image, $A: \R^n \rightarrow \R^n$ is a blurring operator, $W:\R^n \rightarrow \R^n$ is the discrete Haar wavelet transform with four levels and $\lambda > 0$ is the regularization parameter. The blurring operator is constructed by making use of the Matlab routines {\ttfamily imfilter} and {\ttfamily fspecial} as follows:
\lstset{language=Matlab}
\begin{lstlisting}[numbers=left,numberstyle=\tiny,frame=tlrb,showstringspaces=false]
H=fspecial('gaussian',9,4); % gaussian blur of size 9 times 9
                            % and standard deviation 4
B=imfilter(X,H,'conv','symmetric'); % B=observed blurred image
                                    % X=original image		
\end{lstlisting}
The function {\ttfamily fspecial} returns a rotationally symmetric Gaussian lowpass filter of size $9 \times 9$ with standard deviation $4$,  the entries of $H$ being nonnegative and their sum adding up to $1$. The function {\ttfamily imfilter} convolves the filter $H$  with the image $X$ and furnishes the blurred image $B$. The boundary option ``symmetric'' corresponds to reflexive boundary conditions. Thanks to the rotationally symmetric filter $H$, the linear operator $A$ defined via the routine {\ttfamily imfilter} is symmetric, too. By making use of the real spectral decomposition of $A$, it shows that $\left\| A \right\|^2=1$. Furthermore, since $W$ is an orthogonal wavelet, it holds $\left\|W \right\|^2=1$.

The optimization problem \eqref{probimageproc} can be written as
\begin{equation*}
\inf_{x \in \R^n}{\left\{f(x) + g_1(Ax) + g_2(Wx)\right\}},
\end{equation*}
where $f: \R^n \rightarrow \R$ is taking to be $f \equiv 0$ with the Lipschitz constant of its gradient $L_{\nabla f} = 0$, $g_1:\R^n \rightarrow \R$, $g_1(y)=\left\| y-b \right\|_1$ is convex and $\sqrt{n}$-Lipschitz continuous and $g_2:\R^n \rightarrow \R$, $g_2(y)=\lambda \left\| y \right\|_1$ is convex and $\lambda \sqrt{n}$-Lipschitz continuous. For every $p \in \R^n$ it holds $g_1^*(p)=\delta_{\left[ -1, 1\right]^n}(p)+p^Tb$ and $g_2^*(p)=\delta_{\left[ -\lambda, \lambda\right]^n}(p)$ (see, for instance, \cite{Bot10}). We solved this problem, by using also the considerations made in Subsection \ref{submore}, with algorithm \eqref{diff-scheme-variable-smoothing} and computed to this aim for $\mu \in \R_{++}$ and $x \in \R^n$
\begin{align*}
		\Prox_{\frac{1}{\mu}g_1^*}\left( \frac{Ax}{\mu} \right)
		&= \argmin_{p \in \R^n}{\left\{ \frac{1}{\mu} g_1^*(p) +\frac{1}{2}\left\| \frac{Ax}{\mu} - p \right\|^2  \right\}}
		\!= \!\argmin_{p \in \left[ -1, 1 \right]^n}{\left\{ \frac{1}{\mu} p^Tb + \frac{1}{2}\left\| \frac{Ax}{\mu} - p \right\|^2\right\}} \\
		&= \argmin_{p \in \left[ -1, 1 \right]^n}{\left\{ \frac{1}{2}\left\| \frac{Ax}{\mu} - p \right\|^2 -\left(\frac{Ax}{\mu}-p\right)^T\frac{b}{\mu} + \frac{\left\| b \right\|^2}{2\mu^2} -  \frac{\left\| b \right\|^2}{2\mu^2} + \frac{(Ax)^Tb}{\mu^2}\right\}} \\
		&= \argmin_{p \in \left[ -1, 1 \right]^n}{\left\{ \frac{1}{2}\left\| \frac{Ax-b}{\mu} - p \right\|^2 \right\}} -  \frac{\left\| b \right\|^2}{2\mu^2} + \frac{(Ax)^Tb}{\mu^2}
		= \mathcal{P}_{\left[ -1, 1\right]^n}\left( \frac{Ax-b}{\mu} \right)
\end{align*}
and
\begin{align*}
		\Prox_{\frac{1}{\mu}g_2^*}\left( \frac{Wx}{\mu} \right)
		= \argmin_{p \in \R^n}{\left\{ \frac{1}{\mu} g_2^*(p) +\frac{1}{2}\left\| \frac{Wx}{\mu} - p \right\|^2  \right\}}
		&= \argmin_{p \in \left[ -\lambda, \lambda\right]^n}{\frac{1}{2}\left\| \frac{Wx}{\mu} - p \right\|^2} \\
		&= \mathcal{P}_{\left[ -\lambda, \lambda\right]^n}\left( \frac{Wx}{\mu} \right).
\end{align*}
Hence, choosing $\mu_k = \frac{1}{ak}$, for some parameter $a \in \R_{++}$ and taking into account that $L_k = \frac{\left\| A \right\|^2 + \|W\|^2}{\mu_k} = 2ak$, for $k \geq 1$, the iterative scheme \eqref{diff-scheme-variable-smoothing} with starting point $b\in\R^n$ becomes
\begin{empheq}[box=\fbox]{align*}
\text{Initialization} : & \ t_1=1, \ y_1=x_0=b \in \R^n,\ a>0,   \\
\text{For }	k\geq1 : & \ \mu_k=\frac{1}{ak}, \ L_k=2ak, \\
		& \ x_k = y_k - \frac{1}{L_k} \left( A\mathcal{P}_{\left[ -1, 1\right]^n}\left( \frac{Ay_k-b}{\mu_k} \right) + W\mathcal{P}_{\left[ -\lambda, \lambda\right]^n}\left( \frac{Wy_k}{\mu_k} \right) \right), \\
		& \ t_{k+1} = \frac{1+\sqrt{1+4t_k^2}}{2}, \\
		&  \ y_{k+1} = x_k + \frac{t_k-1}{t_{k+1}}(x_k - x_{k-1})
\end{empheq}
We considered the $256 \times 256$ cameraman test image, which is part of the image processing toolbox in Matlab, that we vectorized (to a vector of dimension $n=256^2 =65536$) and normalized, in order to make pixels range in the closed interval from $0$ (pure black) to $1$ (pure white). In addition, we added normally distributed white Gaussian noise with standard deviation $10^{-3}$ and set the regularization parameter to $\lambda=2\text{e-}5$. The original and observed images are shown in Figure \ref{fig:cameraman-original}.
\begin{figure}[ht]
	\centering
	\includegraphics*[viewport= 91 305 540 520, width=0.8\textwidth]{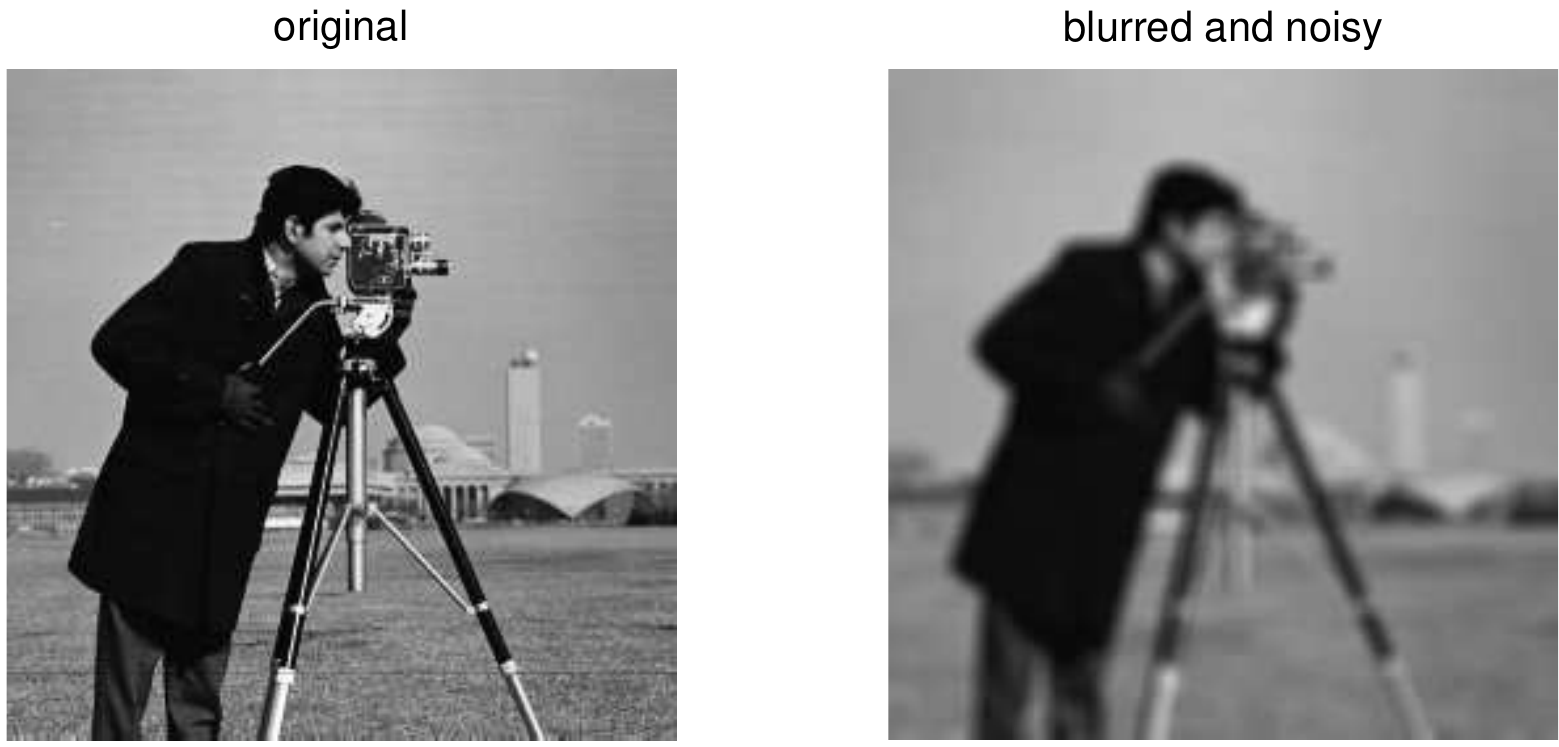}
	\caption{The $256 \times 256$ cameraman test image}
	\label{fig:cameraman-original}
\end{figure}
When measuring the quality of the restored images, we made use of the \textit{improvement in signal-to-noise ratio (ISNR)}, which is defined as
$$ \text{ISNR}_k = 10 \log_{10} \left( \frac{\left\| x - b \right\|^2}{\left\| x - x_k \right\|^2} \right), $$
where $x$, $b$ and $x_k$ denote the original, the observed and the estimated image at iteration $k \geq 1$, respectively. We tested several values for $a \in \R_{++}$ and we obtained after $100$ iterations the
objective values and the ISNR values presented in Table \ref{table:epsilon}.
\begin{table}[ht]
	\centering
	\begin{tabular}{ l || c | c | c | c | c | c | c | c }
		$a$  & $1$e-$4$  & $1$e-$3$ & $1$e-$2$ & $1$e-$1$ &   $1$    & $1$e+$1$ & $1$e+$2$  & $1$e+$3$  \\ \hline \hline
		fval & $164.621$ & $80.915$ & $55.763$ & $53.669$ & $53.579$ & $63.754$ & $208.413$ & $531.022$   \\
		ISNR & $1.282$   & $3.839$  & $5.241$  & $5.352$  & $5.337$  & $4.351$  & $1.180$   & $0.199$
	\end{tabular}
	\caption{Objective values (fval) and ISNR values (higher is better) after $100$ iterations.}
	\label{table:epsilon}
\end{table}
 In the context of solving the problem \eqref{probimageproc} we compared the variable smoothing approach (VS) for $a=1$e-$1$ with the operator-splitting algorithm based on skew splitting (SS) proposed in \cite{BriCom11,ComPes12} with parameters $\varepsilon=\frac{1}{2(\sqrt{2} +1)}$ and $\gamma_k = \gamma = \frac{\varepsilon}{2} + \frac{1-\varepsilon}{2\sqrt{2}}$, for any $k \geq 1$, and with the primal-dual algorithm (PD) from \cite{ChaPoc11} with parameters $\theta=1$, $\sigma=0.01$ and $\tau=49.999$. The parameters considered for the three approaches provide the best results when solving \eqref{probimageproc}.
\begin{figure}[ht]	
	\centering
	\includegraphics*[viewport= 143 249 470 606, width=0.32\textwidth]{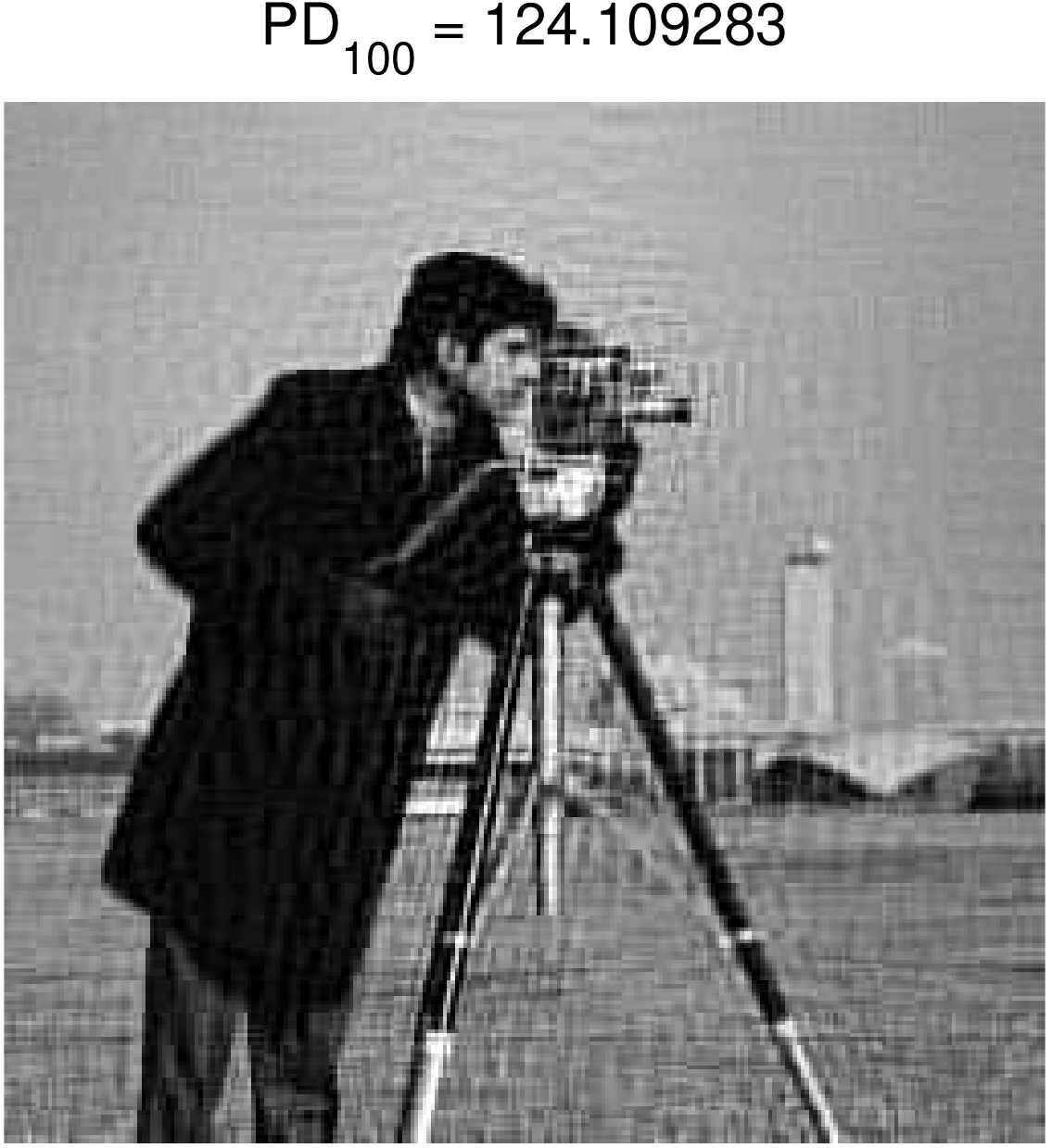}
	\includegraphics*[viewport= 143 249 470 606, width=0.32\textwidth]{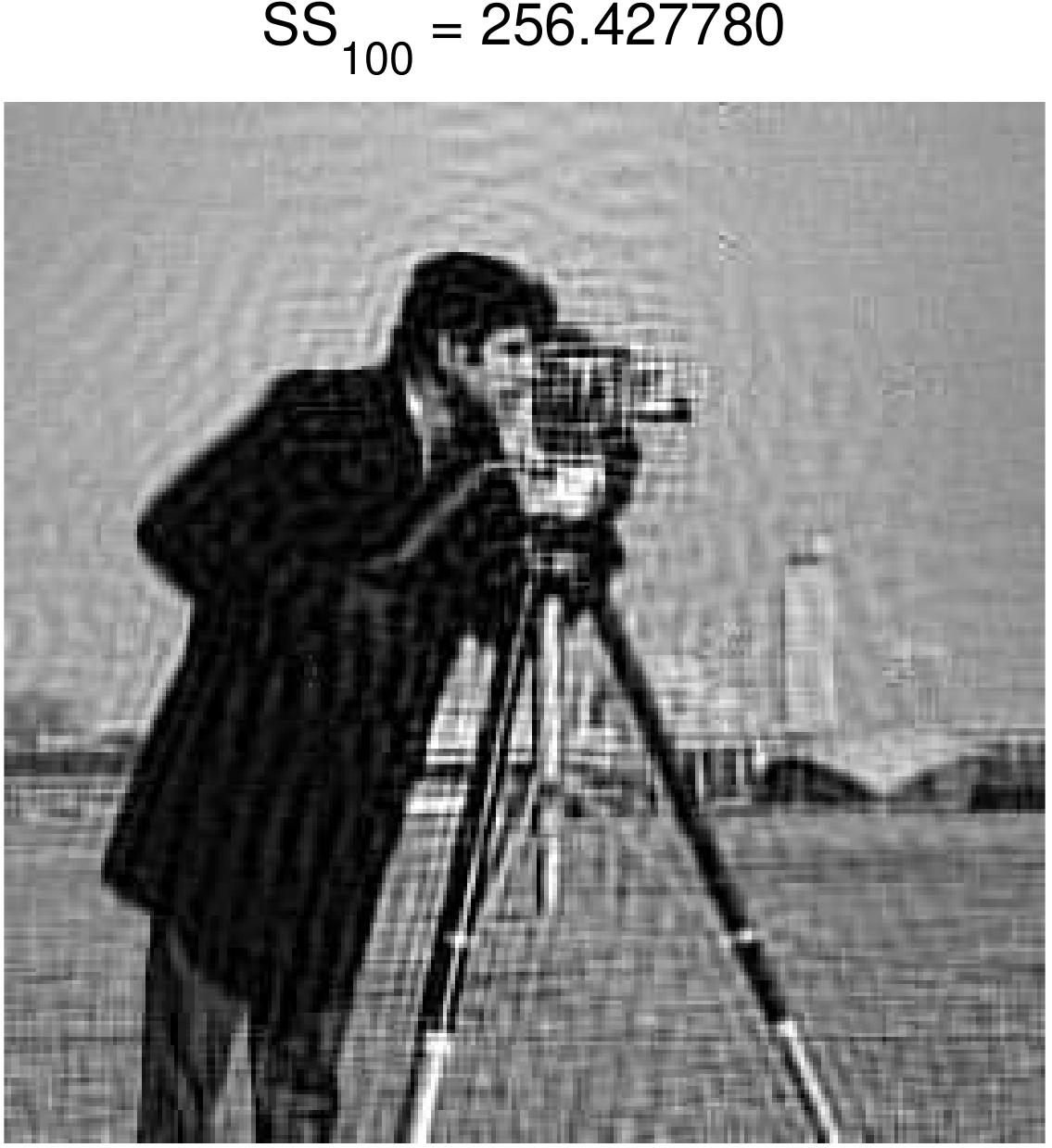}
	\includegraphics*[viewport= 143 249 470 606, width=0.32\textwidth]{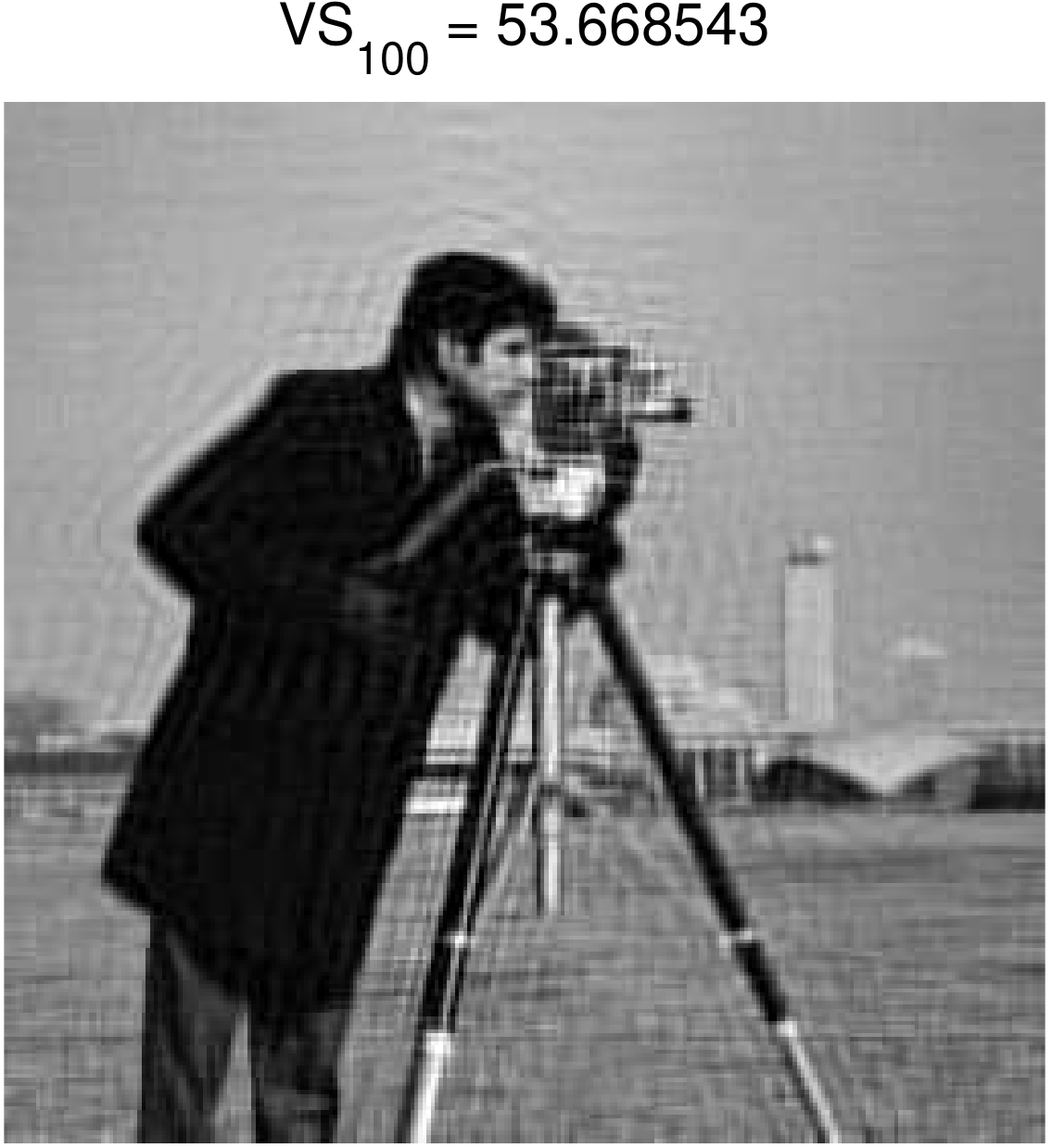}	
	\caption{Results furnished by the primal-dual (PD), the skew splitting (SS) and the variable smoothing (VS) algorithms after 100 iterations.}
	\label{fig:cameramen-PS}	
\end{figure}
The output of these three algorithms after $100$ iterations, along with the corresponding objective values, can be seen in Figure \ref{fig:cameramen-PS} and they show that the variable smoothing approach outperforms the other two methods.  Figure \ref{fig:cameramen-fval-ISNR} shows the evolution of the values of the objective function and of the improvement in signal-to-noise ratio within the first $100$ iterations.
\begin{figure}[ht]	
	\centering
	\includegraphics*[viewport= 57 247 552 597, width=0.49\textwidth]{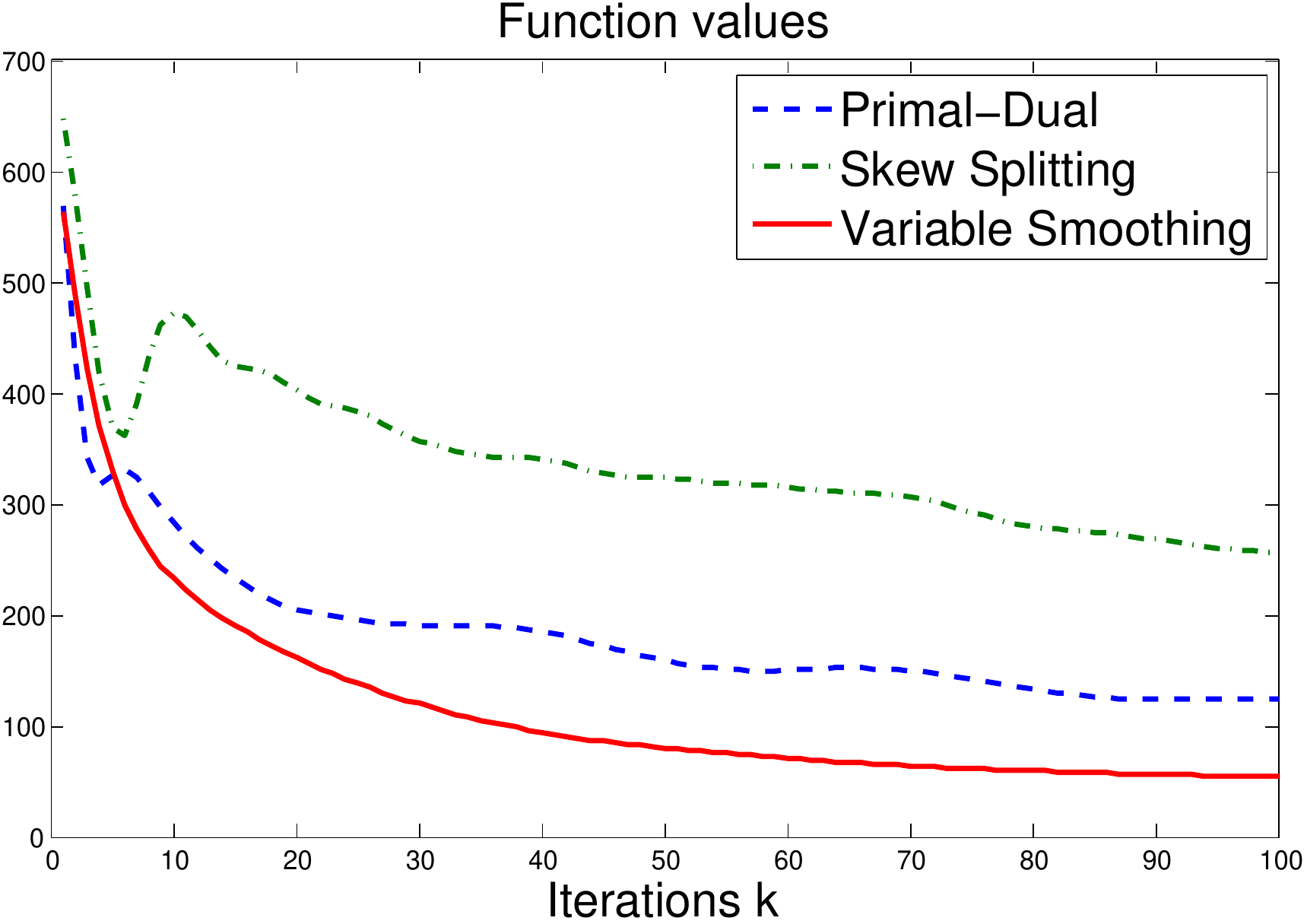}
	\includegraphics*[viewport= 57 247 552 597, width=0.49\textwidth]{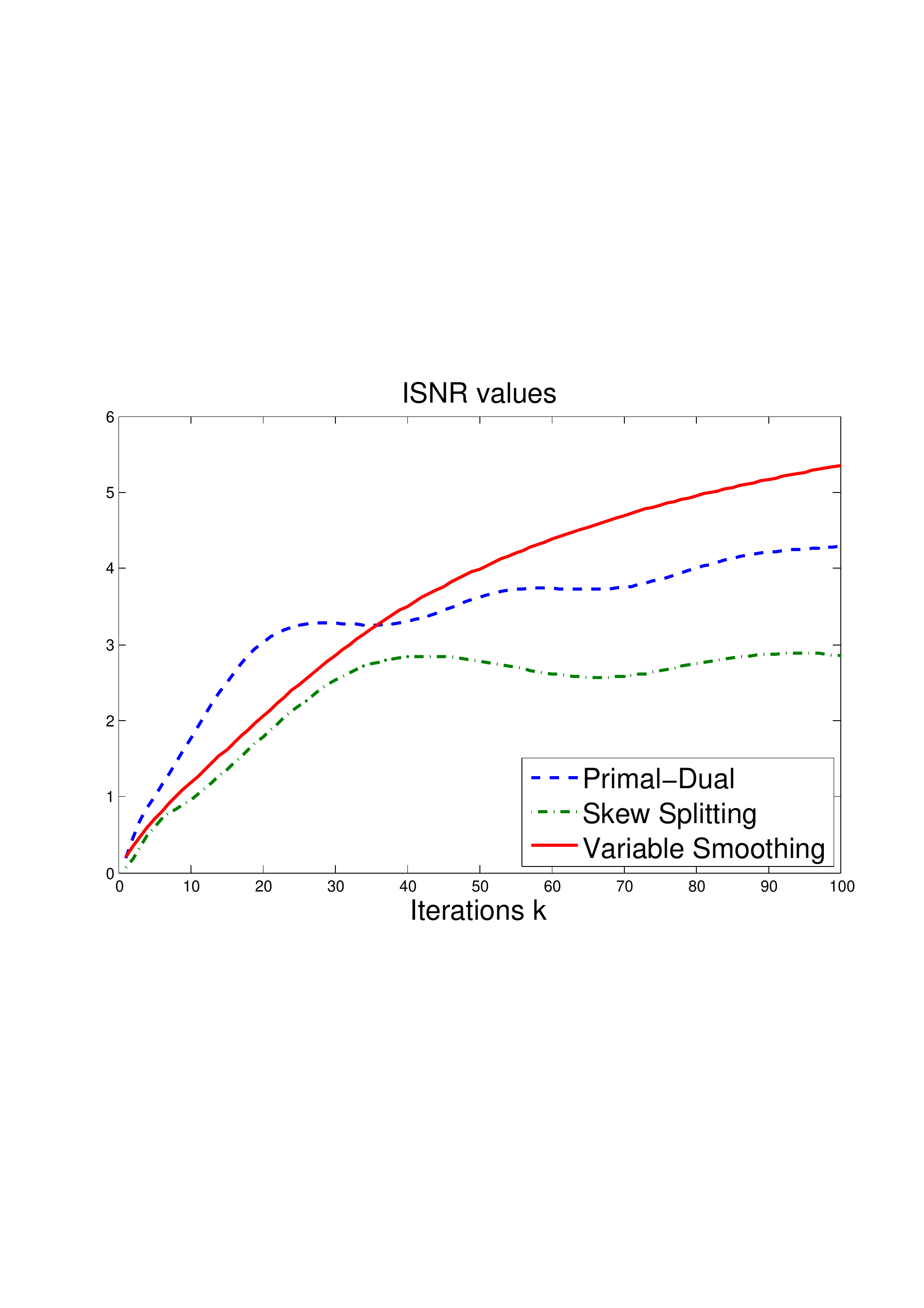}
	\caption{The evolution of the values of the objective function and of the ISNR for the primal-dual (PD), the skew splitting (SS) and the variable smoothing (VS) algorithms after 100 iterations.}
	\label{fig:cameramen-fval-ISNR}	
\end{figure}

\subsection{Support vector machines classification}\label{subsvmc}

The second numerical experiment we consider for the variable smoothing algorithm concerns the solving of the problem of classifying images via  support vector machines classification, an approach which belong to the class of kernel based learning methods.

The given data set consisting of $5268$ images of size $200 \times 50$ was taken from a real-world problem a supplier of the automotive industry was faced with by establishing a computer-aided quality control for manufactured devices at the end of the manufacturing process (see \cite{BotHeiWan12} for more details on this data set). The overall task is to classify fine and defective components which are labeled by $+1$ and $-1$, respectively.

The classifier functional $\verb"f"$ is assumed to be an element of the \textit{Reproducing Kernel Hilbert Space (RHKS)} $\h_\kappa$, which in our case is induced by the symmetric and finitely positive definite Gaussian kernel function
$$\kappa: \R^d \times \R^d \rightarrow \R, \  \kappa(x,y) = \exp \left( -\frac{\left\|x-y \right\|^2}{2 \sigma^2} \right).$$
Let $\langle \cdot,\cdot \rangle_\kappa$ denote the inner product on $\h_\kappa$, $\|\cdot\|_\kappa$  the corresponding norm and
$K\in\R^{n\times n}$ the \textit{Gram matrix} with respect to the training data set $\Z=\{(X_1,Y_1),\ldots,(X_n,Y_n)\}\subseteq \R^d \times \{+1,-1\}$, namely the symmetric and positive definite matrix with entries $K_{ij}=\kappa(X_i,X_j)$ for $i,j=1,\ldots,n$. Within this example we make use of the \textit{hinge loss} $v:\R \times \R \rightarrow \R$, $v(x,y)=\max \{1-xy,0\}$, which penalizes the deviation between the predicted value $\verb"f"(x)$ and the true value $y \in \{+1,-1\}$. The smoothness of the decision function $\verb"f" \in \h_\kappa$ is employed by means of the \textit{smoothness functional} $\Omega : \h_\kappa \rightarrow \R$, $\Omega(f)=\left\|\verb"f"\right\|_\kappa^2$, taking high values for non-smooth functions and low values for smooth ones. The decision function $\verb"f"$ we are looking for is the optimal solution of the \textit{Tikhonov regularization problem}
\begin{equation}
		\label{opt-problem:SVM-original}
		\inf_{\verb"f" \in \h_\kappa}{\left\{\frac{1}{2} \Omega (\verb"f") +  C \sum_{i=1}^n v(\verb"f"(X_i),Y_i) \right\}},
\end{equation}
where $C>0$ denotes the regularization parameter controlling the tradeoff between the loss function and the smoothness functional.

The \textit{representer theorem} (cf. \cite{ShaChr04}) ensures the existence of a vector of coefficients $c=(c_1,\ldots,c_n)^T \in \R^n$ such that the minimizer $\verb"f"$ of \eqref{opt-problem:SVM-original} can be expressed as a kernel expansion in terms of the training data, i.e., $\verb"f"(\cdot) = \sum_{i=1}^n c_i \kappa(\cdot, X_i)$. Thus, the smoothness functional becomes $\Omega(\verb"f")=\left\|\verb"f"\right\|_\kappa^2 = \<\verb"f",\verb"f"\>_\kappa = \sum_{i=1}^n \sum_{j=1}^n {c_i c_j \kappa(X_i,X_j)} = c^T Kc$ and for $i=1,\ldots,n$, it holds $\verb"f"(X_i) = \sum_{j=1}^n c_j \kappa(X_i,X_j) = (Kc)_i$. Hence, in order to determine the decision function one has to solve the convex optimization problem
\begin{align}\label{optsvmc}
\inf_{c \in \R^n}{\left\{f(c) +  C \sum_{i=1}^n g_i(Kc) \right\}},
\end{align}
where $f: \R^n \rightarrow \R$, $f(c) = \frac{1}{2}c^TKc$, and $g_i:\R^n \rightarrow \R$, $g_i(c)=Cv(c_i,Y_i)$ for $i=1,\ldots,n$. The function $f: \R^n \rightarrow \R$ is convex and differentiable and it fulfills $\nabla f(c) = Kc$ for every $c \in \R^n$, thus $\nabla f$ is Lipschitz continuous with Lipschitz constant $L_{\nabla f} = \|K\|$. For any $i=1,...,n$ the function $g_i : \R^n \rightarrow \R$ is convex and $C$-Lipschitz continuous, properties which allowed us to solve the problem \eqref{optsvmc} with algorithm \eqref{diff-scheme-variable-smoothing}, by using also the considerations made in Subsection \ref{submore}. For any $i=1,...,n$ and every $p=(p_1,...,p_n)^T \in \R^n$ it holds (see, also, \cite{BotLorenz, BotHeiWan12})
\begin{align*}
		g_i^*(p) & =  \sup_{c\in\R^n}{\left\{ \< p,c\> -Cv(c_i,Y_i) \right\}} = C\sup_{c\in\R^n}{\left\{ \< \frac{p}{C},c \> -v(c_i,Y_i) \right\}} \\
		&= \left\{\begin{aligned} &C(v(\cdot,Y_i))^*\left(\frac{p_i}{C}\right), \ \text{if }  p_j=0,\ i\neq j, \\ &+\infty, \ \text{otherwise,} \end{aligned} \right.\\
&= \left\{ \begin{aligned} & p_iY_i, \ \text{if } p_j=0,\ i\neq j \ \mbox{and} \ p_iY_i \in [-C,0], \\ &+\infty, \ \text{otherwise}. \end{aligned} \right.
\end{align*}
Thus, for $\mu \in \R_{++}$, $c=(c_1,...,c_n)^T$ and $i=1,...,n$ we have
\begin{align*}
	\Prox_{\frac{1}{\mu}g_i^*}\left( \frac{c}{\mu} \right)
	&= \argmin_{p \in \R^n}{\left\{ \frac{1}{\mu}g_i^*(p) + \frac{1}{2} \left\| \frac{c}{\mu} - p \right\|^2  \right\}} \\
	&= \argmin_{\substack{p_iY_i \in \left[-C,0\right] \\ p_j=0, j\neq i}}{\left\{\frac{p_iY_i}{\mu} + \frac{1}{2} \left(\frac{c_i}{\mu} - p_i \right)^2  \right\}} \\
	&= \argmin_{\substack{p_iY_i \in \left[-C,0\right] \\ p_j=0, j\neq i}}{\left\{p_iY_i + \frac{\mu}{2} \left(\frac{c_i}{\mu} - p_i \right)^2  \right\}}.
\end{align*}
For $Y_i=1$ we have
\begin{align*}
	\Prox_{\frac{1}{\mu}g_i^*}\left( \frac{c}{\mu} \right)
	= \argmin_{\substack{p_iY_i \in \left[-C,0\right] \\ p_j=0, j\neq i}}{\left\{ p_i + \frac{\mu}{2} \left( \frac{c_i}{\mu} - p_i \right)^2  \right\}}
	= \left( 0,\ldots, \mathcal{P}_{\left[-C,0\right]}\left( \frac{c_i - 1}{\mu} \right),\ldots, 0 \right)^T,
\end{align*}
while for $Y_i=-1$, it holds
\begin{align*}
	\Prox_{\frac{1}{\mu}g_i^*}\left( \frac{c}{\mu} \right)
	= \argmin_{\substack{p_iY_i \in \left[-C,0\right] \\ p_j=0, j\neq i}}{\left\{ -p_i + \frac{\mu}{2} \left( \frac{c_i}{\mu} - p_i \right)^2  \right\}}
	= \left( 0,\ldots, \mathcal{P}_{\left[0,C\right]}\left( \frac{c_i + 1}{\mu} \right),\ldots, 0 \right)^T.
\end{align*}
Summarizing, it follows
\begin{align*}
	\Prox_{\frac{1}{\mu}g_i^*}\left(\frac{c}{\mu}\right) = \left( 0,\ldots, \mathcal{P}_{Y_i[-C,0]}\left(\frac{c_i -Y_i}{\mu} \right),\ldots, 0 \right)^T.
\end{align*}
Thus, for every $c=(c_1,...,c_n)^T$ we have
\begin{align*}
	\nabla \left(\sum_{i=1}^n (^{\mu}g_i \circ K)\right)(c) & = \sum_{i=1}^n{\nabla (^{\mu}g_i \circ K)(c)} = \sum_{i=1}^n{K \Prox_{\frac{1}{\mu}g_i^*}\left( \frac{Kc}{\mu} \right)} \\
	&= K\left(\mathcal{P}_{Y_1\left[-C,0\right]}\left( \frac{(Kc)_1 -Y_1}{\mu} \right),..., \mathcal{P}_{Y_n\left[-C,0\right]}\left( \frac{(Kc)_n -Y_n}{\mu} \right) \right)^T.
\end{align*}
Using the nonexpansiveness of the projection operator, we obtain for every $c,d \in \R^n$
\begin{align*}
	\left\| \nabla \left(\sum_{i=1}^n (g_i^{\mu} \circ K)\right)(c) - \nabla \left(\sum_{i=1}^n (g_i^{\mu} \circ K)\right)(d)  \right\|
	\leq \left\| K \right\| \left\| \frac{Kc-Kd}{\mu} \right\|
	\leq \frac{\left\| K \right\|^2}{\mu} \left\| c-d \right\|.
\end{align*}
Choosing $\mu_k = \frac{1}{ak}$, for some parameter $a \in \R_{++}$ and taking into account that $L_k = \|K\| + ak \left\| K \right\|^2$, for $k \geq 1$, the iterative scheme \eqref{diff-scheme-variable-smoothing} with starting point $x_0 = 0 \in\R^n$ becomes
\begin{empheq}[box=\fbox]{align*}
\text{Initialization} : & \ t_1=1, \ y_1=x_0=0 \in \R^n, \ a\in\R_{++},   \\
\text{For } k \geq 1 : & \ \mu_k=\frac{1}{ak},\ L_k=\left\| K \right\| + ak \left\| K \right\|^2, \\
		& \ x_k = y_k - \frac{1}{L_k} \left(Ky_k +  K\left(\mathcal{P}_{Y_i\left[-C,0\right]}\left( \frac{(Kc)_i -Y_i}{\mu} \right)\right)_{i=\overline{1,n}}^T \right),\\
		& \ t_{k+1} = \frac{1+\sqrt{1+4t_k^2}}{2}, \\
		& \ y_{k+1} = x_k + \frac{t_k-1}{t_{k+1}}(x_k - x_{k-1})
\end{empheq}

\begin{figure}[ht]	
	\centering
	\includegraphics*[viewport= 277 355 326 506, width=0.08\textwidth]{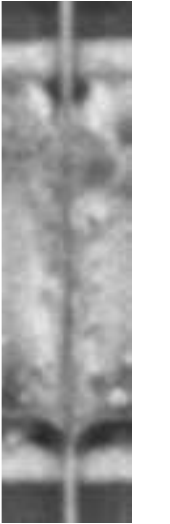}
	\includegraphics*[viewport= 277 355 326 506, width=0.08\textwidth]{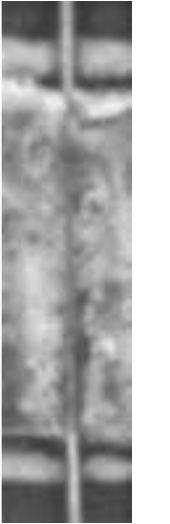}
	\includegraphics*[viewport= 277 355 326 506, width=0.08\textwidth]{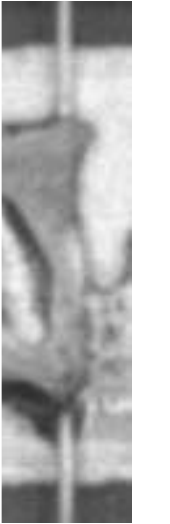}	
	\includegraphics*[viewport= 277 355 326 506, width=0.08\textwidth]{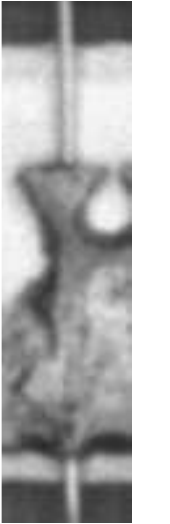}	
	\caption{Example of two fine and two defective devices.}
	\label{fig:classification-good-bad}	
\end{figure}
Coming to the real-data set, we denote by $\D=\left\{(X_i,Y_i), i=1,\ldots,5268 \right\} \subseteq \R^{10000} \times \{+1,-1\}$ the set of all data available consisting of $2682$ images of class $+1$ and $2586$ images of class $-1$. Notice that two examples of each class are shown in Figure \ref{fig:classification-good-bad}. Due to numerical reasons, the images have been normalized (cf. \cite{LalChaSch06}) by dividing each of them by the quantity $\left(\frac{1}{5268} \sum_{i=1}^{5268}{\left\|X_i \right\|^2}\right)^{\frac{1}{2}}$.
\begin{table}[ht]
	\centering
		\begin{tabular}{ l || c | c | c | c | c | c | c | c | c }
		$a$   & $1$e-$5$ & $1$e-$4$ & $1$e-$3$ & $1$e-$2$ & $1$e-$1$ & $1$ & $1$e+$1$ & $1$e+$2$ & $$1e+$3$ \\ \hline \hline
		err & $0.4176$ & $0.3037$ & $0.2278$ & $0.2468$ & $0.3986$ & $0.5315$ & $0.5125$ & $1.5945$ & $48.9561$
	\end{tabular}
	\caption{Average classification errors in percentage.}
	\label{table:classification-epsilon}
\end{table}
We considered as regularization parameter $C=100$ and as kernel parameter $\sigma=0.5$, which are the optimal values reported in \cite{BotHeiWan12} for this data set from a given pool of parameter combinations, tested different values for $a \in \R_{++}$ and performed for each of those choices  a 10-fold cross validation on $\D$. We terminated the algorithm after a fixed number of $10000$ iterations was reached, the average classification errors being presented in Table \ref{table:classification-epsilon}. For $a=1$e-$3$ we obtained the lowest missclassification rate of $0.2278$ percentage. In other words, from $527$ images
belonging to the test data set an average of $1.2$ were not correctly classified.

\end{document}